\newtheorem{theorem}{Theorem}[section]
\newtheorem{lemma}{Lemma}[section]
\newtheorem{cor}{Corollary}[section]
\newtheorem{prop}{Proposition}[section]
\newtheorem{defn}{Definition}[section]
\newtheorem{example}{Example}
\begin{document}

\title{xxxx}
 \title{Sets of $\beta$-expansions and the Hausdorff Measure of Slices through Fractals}
\author{Tom Kempton}
\maketitle
\begin{abstract}We study natural measures on sets of $\beta$-expansions and on slices through self similar sets. In the setting of $\beta$-expansions, these allow us to better understand the measure of maximal entropy for the random $\beta$-transformation and to reinterpret a result of Lindenstrauss, Peres and Schlag in terms of equidistribution. Each of these applications is relevant to the study of Bernoulli convolutions. In the fractal setting this allows us to understand how to disintegrate Hausdorff measure by slicing, leading to conditions under which almost every slice through a self similar set has positive Hausdorff measure, generalising long known results about almost everywhere values of the Hausdorff dimension.\end{abstract}
\section{Introduction}
Given $\beta\in(1,2)$, a $\beta$-expansion of a real number $x$ is a sequence $\underline a\in\{0,1\}^{\mathbb N}$ for which
\[
\pi_{\beta}(\underline a):=\sum_{i=1}^{\infty}a_i\beta^{-i}=x.
\]
We let $\mathcal E_{\beta}(x):=\pi_{\beta}^{-1}(x)$ denote the set of $\beta$-expansions of $x$. 

The primary purpose of this article is to seek to understand measures on $\mathcal E_{\beta}(x)$. In particular, we study the family of measures $m_x:=m|_{\mathcal E_{\beta}(x)}$ obtained by disintegrating the uniform $(\frac{1}{2},\frac{1}{2})$ Bernoulli measure $m$ on $\{0,1\}^{\mathbb N}$. These measures appear as disintegrations of the measure of maximal entropy for the random $\beta$-transformation in \cite{DdV1}, and are used to state an equidistribution result for $\beta$-expansions in \cite{LPS}.

We begin by assuming that the Bernoulli convolution $\nu_{\beta}$ (defined later) is absolutely continuous. In this setting we build a two-dimensional dynamical system which preserves Lebesgue measure and for which vertical fibres through the state space correspond to the sets $\mathcal E_{\beta}(x)$. By lifting one dimensional Lebesgue measure on these fibres to the sets $\mathcal E_{\beta}(x)$ we obtain formulae for $m_x$ in terms of the density of $\nu_{\beta}$. 



We also consider Hausdorff measure on $\mathcal E_{\beta}(x)$. Results on the cardinality, branching rate and dimension of $\mathcal E_{\beta}(x)$ were given in a series of recent papers \cite{BakerGrowth, FengSidorov, CountingBeta, Sidorov1}. We continue this line of research by showing that for certain $\beta$, including almost all $\beta\in(1,\sqrt{2})$, the set $\mathcal E_{\beta}(x)$ has positive finite Hausdorff measure, and in that case the normalised Hausdorff measure on $\mathcal E_{\beta}(x)$ coincides with $m_x$. Our necessary and sufficient condition for the positivity of Hausdorff measure is that the Bernoulli convolution $\nu_{\beta}$ is absolutely continuous with bounded density.

We then use the formulae for the measures $m_x$ obtained by our natural extension to reinterpret the results of \cite{LPS} as equidistribution results for the sets $\mathcal E_{\beta}(x)$. In particular, we show that for almost all $\beta\in(1,\sqrt{2})$ and almost all $x\in I_{\beta}$ the sets 
\[
\mathcal O^n(x):=\{\pi_{\beta}(\sigma^n(\underline{a})):\underline a\in \mathcal E_{\beta}(x)\} 
\]
equidistribute with respect to Lebesgue measure as $n\to\infty$, where $\sigma$ denotes the left shift. Hochman proved in \cite{HochmanOverlaps} that $\inf\{|x-y|:x,y\in \mathcal O^n(x)\}$ tends to zero superexponentially whenever $\nu_{\beta}$ has dimension less than $1$. We conjecture that the sets $\mathcal O^n(x)$ equidistribute if and only if $\nu_{\beta}$ is absolutely continuous. We are also able to use our results to prove a finer result (Proposition \ref{GrowthProp}) about the typical branching rate of sets of $\beta$-expansions, making progress towards Conjecture 1 of \cite{CountingBeta}.



For each statement that we make about sets of $\beta$-expansions and Bernoulli convolutions, there is an analogous statement about slices through self similar sets and projections of Hausdorff measure. We let $E\subset \mathbb R^n$ be a self similar set of Hausdorff dimension $s$, where the similarities do not include rotations and  satisfy another technical condition (Definition \ref{DeltaDefn}). We let $E_{\theta}$ be the orthogonal projection of $E$ onto the line passing through the origin at angle $\theta=(\theta_1,\theta_2,\cdots,\theta_{n-1})$. We let $E_{\theta,x}$ be the intersection of $E$ with the $(n-1)$-dimensional plane perpendicular to $E_{\theta}$ and passing through $x\in E_{\theta}$. We call the sets $E_{\theta,x}$ slices of $E$.

Our main theorem for fractals, Theorem \ref{FractalThm}, states that $\mathcal H^{s-1}(E_{\theta,x})>0$ for almost every $x\in E_{\theta}$ if and only if the orthogonal projection of Hausdorff measure on $E$ to $E_{\theta}$ is absolutely continuous with bounded density. An example application is the following, we recall that the Menger sponge  is the self similar set defined recursively by subdividing $[0,1]^3$ into 27 subcubes of side length $\frac{1}{3}$, discarding the subcube at the centre of each face of our original cube and the subcube in the centre of our original cube, and then repeating the process for each of the 20 remaining subcubes

\begin{example}
Let $E$ be the Menger sponge. Then almost every plane slice through $E$ has positive finite $\left(\frac{\log(20)}{\log(3)}-1\right)$-dimensional Hausdorff measure.
\end{example}
Corresponding theorems due to Marstrand for the dimension of slices through fractals are well known, but the extension to the case of Hausdorff measure of slices through fractals is new.

In the final section we state a number of open questions related to our work.

\section{Preliminaries}
Let $\Sigma:=\{0,1\}^{\mathbb N}$. We define the left shift $\sigma:\Sigma\to\Sigma$ by \[\sigma(a_1a_2a_3\cdots)=(a_2a_3\cdots).\] Given a word $a_1\cdots a_n\in\{0,1\}^n$ we let the cylinder $[a_1\cdots a_n]$ be given by
\[
[a_1\cdots a_n]:=\{\underline b\in\Sigma:b_1\cdots b_n=a_1\cdots a_n\}.
\]
We let $m$ be the $(\frac{1}{2},\frac{1}{2})$ Bernoulli measure on $\Sigma$, $m$ gives measure $2^{-n}$ to each cylinder $[a_1\cdots a_n]$. 

The Bernoulli convolution $\nu_{\beta}$ is the probability measure on $I_{\beta}:=[0,\frac{1}{\beta-1}]$ defined by
\[
\nu_{\beta}:=m\circ \pi_{\beta}^{-1}.
\]
An alternative definition of $\nu_{\beta}$ is that it is the unique probability measure satisfying the self similarity relation
\[
\nu_{\beta}=\frac{1}{2}\left(\nu_{\beta}\circ T_0 + \nu_{\beta}\circ T_1\right)
\]
where the functions $T_i:\mathbb R\to \mathbb R$ are given by $T_i(x):=\beta x-i$.

There are a number of fascinating open questions relating to Bernoulli convolutions including the fundamental question of for which values of $\beta$ the corresponding Bernoulli convolution is absolutely continuous. Solomyak \cite{SolomyakAC} showed that $\nu_{\beta}$ is absolutely continuous for Lebesgue almost all $\beta\in(1,2)$, and has continuous density for almost all $\beta\in(1,\sqrt{2})$. Mauldin and Simon \cite{MauldinSimon} showed that $\nu_{\beta}$ is actually equivalent to Lebesgue measure whenever it is absolutely continuous. Very recently, Shmerkin \cite{ShmerkinExceptional} has shown that the set of $\beta$ for which $\nu_{\beta}$ is singular has Hausdorff dimension zero.

We let $m_x$ be the disintegration of $m$ by fibres $\mathcal E_{\beta}(x)$. This means that $(m_x)$ is the $\nu_{\beta}$-almost everywhere unique family of measures satisfying that each $m_x$ is a probability measure supported on the fibre $\mathcal E_{\beta}(x)$ and that for every integrable function $f:\Sigma\to\mathbb R$ we have
\begin{equation}\label{disintegration}
\int_{\Sigma} f(\underline a)dm(\underline a)=\int_{I_{\beta}}\int_{\mathcal E_{\beta}(x)}f(\underline a)dm_x(\underline a)d\nu_{\beta}(x).
\end{equation}
The study of the measures $m_x$ is the principle focus of this article.

Expansions of numbers in non-integer bases have been studied since the 1950s with the work of Renyi \cite{Renyi} and Parry \cite{Parry} who were interested in the properties of the largest $\beta$-expansions of $x$ with respect to the lexicographical ordering, known as the greedy $\beta$-expansion. The dynamics of the associated greedy $\beta$-transformation $x\to\beta x$ (mod 1) have been extensively studied over the last sixty years and are well understood. 

Given $\beta\in(1,2)$, the $\beta$-expansion of $x\in I_{\beta}$ is typically not unique, indeed Lebesgue almost every $x\in I_{\beta}$ has uncountably many $\beta$-expansions \cite{Sidorov1}. There is a substantial amount of recent research trying to understand the properties of the sets $\mathcal E_{\beta}(x)$ for typical $x\in I_{\beta}$, see for example \cite{BakerGrowth, BakerSmall, FengSidorov, CountingBeta} and the references therein. Sets of $\beta$-expansions can be generated dynamically using the Random $\beta$-transformation $K_{\beta}$ of Dajani and Kraaikamp \cite{DKRandom}. We define the random $\beta$-transformation $K_{\beta}:\Sigma\times I_{\beta}\to\Sigma\times I_{\beta}$ by 
\[
K_{\beta}(\omega,x)=\left\lbrace\begin{array}{c c}(\omega,T_0(x))& x \in [0,\frac{1}{\beta})\\ (\sigma(\omega),T_{\omega_1}(x))& x \in [\frac{1}{\beta},\frac{1}{\beta(\beta-1)}]\\ (\omega,T_1(x)) & x \in (\frac{1}{\beta(\beta-1)},\frac{1}{\beta-1}]\end{array}\right. .
\]

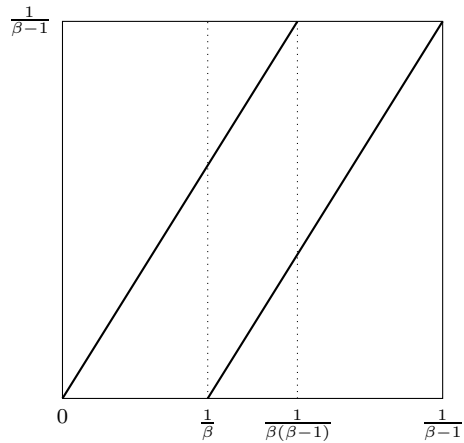
\begin{figure}[ht]
\centering
\begin{tikzpicture}[scale=5]
\draw(0,0)node[below]{\scriptsize 0}--(.382,0)node[below]{\scriptsize$\frac{1}{\beta}$}--(.618,0)node[below]{\scriptsize$\frac{1}{\beta(\beta-1)}$}--(1,0)node[below]{\scriptsize$\frac{1}{\beta-1}$}--(1,1)--(0,1)node[left]{\scriptsize$\frac{1}{\beta-1}$}--(0,.5)--(0,0);
\draw[dotted](.382,0)--(.382,1)(0.618,0)--(0.618,1);
\draw[thick](0,0)--(0.618,1)(.382,0)--(1,1);
\end{tikzpicture}\caption{The projection onto the second coordinate of $K_{\beta}$ for $\beta=\dfrac{1+\sqrt{5}}{2}$}
\end{figure}

Given $x\in I_{\beta}$, $\beta$-expansions of $x$ are generated by choosing some $\omega\in\{0,1\}^{\mathbb N}$ and iterating $K_{\beta}(\omega,x)$. If the $i$th iteration of $K_{\beta}(\omega,x)$ applies $T_0$ to the second coordinate we put $a_i=0$, if it applies $T_1$ to the second coordinate we put $a_i=1$. This generates a sequence $(a_i)$ which is a $\beta$-expansion of $x$, and all $\beta$-expansions of $x$ can be generated this way, see \cite{DKRandom}. 

The measure of maximal entropy of $K_{\beta}$ was studied in \cite{DdV1} and was shown to project to the Bernoulli convolution on its second coordinate. The mapping which takes a pair $(\omega,x)$ to the $\beta$-expansion generated by $(\omega,x)$ is a bijection up to sets of measure zero with respect to the measure of maximal entropy, and thus the system $K_{\beta}$ is a suitable dynamical system for studying both Bernoulli convolutions and sets of $\beta$-expansions.

A full description of the measure of maximal entropy for $K_{\beta}$ was not given in \cite{DdV1}. The authors were able to show that it is not a product measure in general, but the behaviour of this measure on the first coordinate remains unknown in the general case. The measures on $\mathcal E_{\beta}(x)$ introduced in this article allow one to give a full description of the measure of maximal entropy for $K_{\beta}$ in terms of the density of $\nu_{\beta}$ in the case that $\nu_{\beta}$ is absolutely continuous.

The method of coding $\beta$-expansions above gives a bijection (up to sets of measure zero) between $\Sigma$ and $\Sigma\times I_{\beta}$ by associating to a code $(a_i)\in\Sigma$ the corresponding pair $(\omega,x)$. Then the space $\Sigma\times I_{\beta}$ can be seen as a representation of $\Sigma$ for which the complicated projection $\pi_{\beta}$ becomes a simple projection onto the second coordinate, and horizontal fibres can be mapped onto the sets $\mathcal E_{\beta}(x)$. The dynamical system that we build in the next section uses effectively the same idea, except that the sets $\mathcal E_{\beta}(x)$ are represented in a different way which makes invariant measures much easier to study.

\section{A Dynamical System}\label{ExtSec}
We begin by building a dynamical system $(X,\phi,\mu)$ which is measurably isomorphic to the full shift on two symbols (and hence also to the Random $\beta$-transformation), but for which the invariant measure $\mu$ is Lebesgue measure. The sets $\mathcal E_{\beta}(x)$ correspond to vertical slices through the space $X$. 

We assume that $\nu_{\beta}$ is absolutely continuous, and has $\mathcal L^1$ density function $h_{\beta}$. We define the space
\[
X=\{(x,y):x\in I_{\beta}, 0\leq y\leq h_{\beta}(x)\}
\]
and let $\lambda^2$ denote two dimensional Lebesgue measure restricted to $X$.

Now since $\nu_{\beta}$ satisfies the self similarity relation
\[
\nu_{\beta}=\frac{1}{2}\left(\nu_{\beta}\circ T_0 + \nu_{\beta}\circ T_1\right)
\]
we have that $h_{\beta}$ satisfies the relation
\begin{equation}\label{heq}
h_{\beta}(x)=\frac{\beta}{2}\left(h_{\beta}(T_0(x))+h_{\beta}(T_1(x))\right).
\end{equation}
Here we are considering $h_{\beta}$ to be defined on the whole real line, although it takes value $0$ outside of $I_{\beta}$. We partition $X$ into two pieces with non-overlapping interior,
\[
X_0=\{(x,y)\in X: 0\leq y\leq \frac{\beta}{2}h_{\beta}(\beta x)\}
\]
and $X_1=\overline{X\setminus X_0}$. $X_1$ and $X_0$ intersect on a set of Lebesgue measure zero. We define a map $\phi:X\to X$ by
\[
\phi(x,y)=\left\lbrace\begin{array}{cc}\left(\beta x, \frac{2 y}{\beta}\right)&(x,y)\in X_0\\ \left(\beta x-1, \frac{2 y}{\beta}-h_{\beta}(\beta x)\right)&(x,y)\in X_1\end{array}\right. .
\]
The map $\phi$ is well defined except on the intersection of $X_0$ and $X_1$. Because of equation \ref{heq}, we see that $\phi$ maps each of $X_0$ and $X_1$ bijectively onto the whole space $X$ and thus $\phi$ is conjugate to the full shift on two symbols. Furthermore, since $\phi$ stretches the first coordinate by a factor of $\beta$ and stretches the second coordinate by a factor of $\frac{2}{\beta}$, and since each point has exactly two preimages under $\phi$, we see that $\phi$ preserves Lebesgue measure $\lambda^2$. 

The map $\phi$ allows us to assign a unique code $\underline a(x,y)$ to almost every point $(x,y)$ in $X$ by writing 
\[
a_n(x,y)=\left\lbrace\begin{array}{cc}0&\phi^{n-1}(x,y)\in X_0\\1&\phi^{n-1}(x,y)\in X_1\end{array}\right. .
\] There are problems only with boundaries of the partition $X_0,X_1$, as is typical for Markov partition constructions. 

We can describe this coding by a map $P\{0,1\}^{\mathbb N}\to X$. Given a word $a_1\cdots a_n\in\{0,1\}^n$ we let $[a_1\cdots a_n]$ denote the set of sequences $\{\underline x \in\{0,1\}^{\mathbb N}:x_1\cdots x_n=a_1\cdots a_n\}$. We define the set 
\[
[a_1\cdots a_n]_X:=X_{a_1}\cap \phi^{-1}(X_{a_2})\cap\cdots\cap \phi^{-(n-1)}(X_{a_n}).
\]
For each $a_1\cdots a_n\in\{0,1\}^n$ we have $\lambda^2([a_1\cdots a_n]|_X)=2^{-n}$. Then we define $P:\{0,1\}^{\mathbb N}\to X$ by
\[
P(\underline a):=\bigcap_{n=1}^{\infty}[a_1\cdots a_n]_X.
\]
By construction, the coding map $P$ is a measure isomorphism from $(\Sigma,\sigma,m)$ to $(X,\phi,\lambda^2)$. 

\subsection{Pulling Back Lebesgue Measure}
This dynamical system gives rise to a natural measure on the sets $\mathcal E_{\beta}(x)$. Given a code $a_1\cdots a_n\in\{0,1\}^n$ we define
\[
 T_{a_1\cdots a_n}:=T_{a_n}\circ T_{a_{n-1}}\circ\cdots \circ T_{a_1}.
\]
We have that $T_{a_1\cdots a_n}(x)\in I_{\beta}$ if and only if $[a_1\cdots a_n]\cap \mathcal E_{\beta}(x)\neq \phi$, see \cite{DKRandom} for a more detailed description of how to construct $\beta$-expansions. 

Then for $x_0\in I_{\beta}$ we define the fibre
\[
X_{x_0}:=\{(x,y)\in X:x=x_0\}
\]
and see that $P^{-1}(X_x)=\mathcal E_{\beta}(x).$ So we can get a measure on the set $\mathcal E_{\beta}(x)$ by pulling back normalised one dimensional Lebesgue measure on $X_x$. 

This measure can easily be described using $h_{\beta}$. We have that
\[
\phi^n(X_x\cap [a_1\cdots a_n]_X)=X_{T_{a_1\cdots a_n}(x)}.
\]
Then since map $\phi$ expands vertical distances by $\frac{2}{\beta}$, we see that
\begin{eqnarray*}
\lambda(X_x\cap [a_1\cdots a_n]_X)&=&\left(\frac{\beta}{2}\right)^n \lambda(X_{T_{a_1\cdots a_n}(x)})\\
&=&\left(\frac{\beta}{2}\right)^n h_{\beta}(T_{a_1\cdots a_n}(x)),
\end{eqnarray*}
where $\lambda$ denotes one dimensional Lebesgue measure. Summing over all words $a_1\cdots a_n\in\{0,1\}^n$ one recovers equation \ref{heq}. Normalising $\lambda$ to give the fibre total mass $1$, and pulling back to the set $\mathcal E_{\beta}(x)$, we define the measure
\begin{eqnarray*}
m^1_x[a_1\cdots a_n]&:=&\frac{1}{h_{\beta}(x)}\lambda(X_x\cap [a_1\cdots a_n]_X)\\&=&\left(\frac{\beta}{2}\right)^n \frac{h_{\beta}(T_{a_1\cdots a_n}(x))}{h_{\beta}(x)}.
\end{eqnarray*}
The measure $m^1_x$ is a probability measure on $\mathcal E_{\beta}(x)$ defined whenever $\nu_{\beta}$ is absolutely continuous. We prove that it coincides with the measures $m_x$ defined earlier.
\begin{prop}\label{m1Prop}
The measure $m^1_x$ is equal to the measure $m_x$ whenever $m^1_x$ is defined.
\end{prop}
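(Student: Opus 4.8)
The plan is to use the characterising property of a disintegration: the family $(m_x)$ is, up to a $\nu_\beta$-null set of $x$, the \emph{unique} family for which each $m_x$ is a probability measure supported on $\mathcal E_\beta(x)$ and for which the identity \ref{disintegration} holds for all integrable $f$. The preceding discussion already records that each $m^1_x$ is a probability measure, and the formula $m^1_x[a_1\cdots a_n]=(\beta/2)^n h_\beta(T_{a_1\cdots a_n}(x))/h_\beta(x)$ vanishes whenever $T_{a_1\cdots a_n}(x)\notin I_\beta$, i.e.\ whenever $[a_1\cdots a_n]\cap\mathcal E_\beta(x)=\emptyset$, so $m^1_x$ is indeed supported on $\mathcal E_\beta(x)$. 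It therefore suffices to show that the family $(m^1_x)$ satisfies \ref{disintegration}; the equality $m^1_x=m_x$ for $\nu_\beta$-almost every $x$ is then immediate from uniqueness.

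To verify \ref{disintegration} for $(m^1_x)$ I would first reduce to cylinders. Both $A\mapsto m(A)$ and $A\mapsto\int_{I_\beta}m^1_x(A)\,d\nu_\beta(x)$ are probability measures on $\Sigma$, so if they agree on the generating semialgebra of cylinders they agree as measures, and hence the integral identity holds for every integrable $f$ by linearity and monotone convergence. Thus the whole statement reduces to the family of scalar identities
\[
m([a_1\cdots a_n])=\int_{I_\beta}m^1_x[a_1\cdots a_n]\,d\nu_\beta(x),\qquad a_1\cdots a_n\in\{0,1\}^n.
\]
The right-hand side I would compute directly: writing $d\nu_\beta(x)=h_\beta(x)\,dx$ and inserting the formula for $m^1_x$, the density $h_\beta(x)$ cancels and the integral becomes $(\beta/2)^n\int_{I_\beta}h_\beta(T_{a_1\cdots a_n}(x))\,dx$. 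Since $T_{a_1\cdots a_n}$ is affine with slope $\beta^n$ and maps the subinterval of $I_\beta$ on which the integrand is nonzero bijectively onto $I_\beta$, while $h_\beta$ vanishes outside $I_\beta$ and integrates to $1$, the substitution $u=T_{a_1\cdots a_n}(x)$ gives $(\beta/2)^n\beta^{-n}\int_{I_\beta}h_\beta(u)\,du=2^{-n}$, matching $m([a_1\cdots a_n])=2^{-n}$.

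The delicate points here are bookkeeping rather than conceptual. The care needed is in the change of variables: one must confirm that $\{x\in I_\beta: T_{a_1\cdots a_n}(x)\in I_\beta\}$ is a genuine subinterval of $I_\beta$ of length $|I_\beta|\beta^{-n}$ on which $T_{a_1\cdots a_n}$ is an affine bijection onto $I_\beta$, so that the Jacobian $\beta^{-n}$ appears cleanly with no boundary overlaps contributing; this is exactly the interval of $x$ with $[a_1\cdots a_n]\cap\mathcal E_\beta(x)\ne\emptyset$, and summing the resulting cylinder identities over all words of length $n$ simply recovers \ref{heq}. One should also note the a.e.\ nature of the conclusion, which is harmless since $\nu_\beta$ is equivalent to Lebesgue measure so $h_\beta>0$ and $m^1_x$ is defined $\nu_\beta$-a.e. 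A cleaner coordinate-free route worth recording is that $P$ is a measure isomorphism intertwining $\pi_\beta$ with the projection $(x,y)\mapsto x$, sending $m$ to $\lambda^2$; by Fubini the disintegration of $\lambda^2$ over the first coordinate is normalised one-dimensional Lebesgue measure on each fibre $X_x$, and disintegration commutes with this fibration-respecting isomorphism, so its pullback is on the one hand $m_x$ and on the other hand, by definition, $m^1_x$.
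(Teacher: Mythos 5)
Your proposal is correct, and its core argument takes a genuinely more hands-on route than the paper's. Both start from the same reduction: by the a.e.-uniqueness of the disintegration, it suffices to check that the family $(m^1_x)$ consists of probability measures supported on $\mathcal E_\beta(x)$ and satisfies equation \ref{disintegration}. From there the paper transports everything through the isomorphism $P$ to the space $X$, where the identity becomes the classical Fubini theorem for $\lambda^2$ on $X$ (note that the normalisation by $h_\beta(x)$ in $m^1_x$ exactly cancels the density $h_\beta(x)$ of $\nu_\beta$, which is why the inner integral can be taken against unnormalised $\lambda$ on the fibre $X_x$). Your main argument instead verifies the scalar identities $m[a_1\cdots a_n]=\int_{I_\beta}m^1_x[a_1\cdots a_n]\,d\nu_\beta(x)$ directly by the change of variables $u=T_{a_1\cdots a_n}(x)$, using that $\{x:T_{a_1\cdots a_n}(x)\in I_\beta\}=\pi_\beta([a_1\cdots a_n])$ is an interval of length $|I_\beta|\beta^{-n}$ mapped affinely onto $I_\beta$; a $\pi$--$\lambda$ argument then upgrades agreement on cylinders to equality of measures and hence to \ref{disintegration} for all integrable $f$. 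This buys self-containedness: it uses only the explicit formula for $m^1_x$ and never needs the construction of $(X,\phi)$ or the coding map $P$, at the cost of some bookkeeping that the Fubini argument hides. Your closing ``coordinate-free'' remark is, essentially verbatim, the paper's actual proof.
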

\begin{proof}
We recall the measures $(m_x)_{x\in I_{\beta}}$ were defined as the $\nu_{\beta}$-almost everywhere unique collection of probability measures supported on the sets $\mathcal E_{\beta}(x)$ satisfying equation \ref{disintegration}. The measures $m^{1}_x$ are also probability measures supported on $\mathcal E_{\beta}(x)$, and so we need only to show that they satisfy equation \ref{disintegration} in order to verify that $m_x=m^1_x$. But then, since the map $P$ taking $\Sigma$ to $X$ is a bijection which maps $m$ to two dimensional Lebesgue measure on $X$ and $m^{1}_x$ to one dimensional Lebesgue measure on $X_x$, it is enough to show that
\[
\int_X f(x,y) d\lambda^2=\int_{I_{\beta}}\int_{X_x} f(x,y)d\lambda(y)d\lambda(x)
\]
for each integrable $f$. But this is just the classical Fubini theorem, and so we are done.
\end{proof}

\subsection{Comments on the map $\phi$}
We briefly comment on the relationship between our map $\phi$, the random $\beta$-transformation and the fat baker's transformation of \cite{AlexanderYorke}, since these statements are rather outside of the main thrust of our arguments we make them without proof, but they can easily be deduced by looking at our construction.

Firstly we remark that the system $(X,\phi)$ is in fact rather similar to the random $\beta$-transformation $K_{\beta}$. In fact, if one studies the system $(\{0,1\}^{\mathbb N}\times I_{\beta},K_{\beta},\hat{\nu}_{\beta})$ where $\hat{\nu}_{\beta}$ is the measure of maximal entropy for $K_{\beta}$, then one sees that $(X,\phi,\lambda^2)$ and $(\Omega\times I_{\beta},K_{\beta},\hat{\nu}_{\beta})$ are measurably isomorphic. One can prove this rather cheaply by observing that both systems are measurably isomorphic to the full shift on two symbols coupled with the $(\frac{1}{2},\frac{1}{2})$ Bernoulli measure, but it is quite instructive to build the isomorphism directly. It was an open question stated in \cite{DdV} to determine the behaviour of $\hat{\nu}_{\beta}$ on fibres, the above formula for the measure $m^{1}_x$ answers this question in the case that $\nu_{\beta}$ is absolutely continuous.

There is a simple invertible extension of $(X,\phi,\mu)$ given by defining, $\hat X=X\times[0,1]$, $\hat{\mu}=\lambda^3|_{\hat X}$ where $\lambda^3$ denotes three dimensional Lebesgue measure, and $\hat{\phi}((x,y),z)=(\phi(x,y),\frac{z}{2}+i)$ whenever $(x,y)\in X_i$. The system $(\hat X,\hat{\phi},\hat{\mu})$ is measurably isomorphic to $(\hat{\Sigma},\hat{\sigma},m)$ where $\hat{\Sigma}$ denotes the two sided full shift on $2$-symbols. $\hat{\phi}$ is invertible, and if one projects $\hat{\phi}^{-1}$ onto the first and third coordinates one recovers the fat baker's transformation. It was already known that the fat baker's transformation has the two sided shift on two symbols as an invertible extension, but our map $\hat{\phi}^{-1}$ is perhaps a more interesting natural extension, since it preserves Lebesgue measure and maps down onto the factor system by orthogonal projection.

\section{Hausdorff measure for sets of $\beta$-expansions}\label{HDSec}
In this section we prove results about the Hausdorff measure of sets of $\beta$-expansions. For definitions of Hausdorff measure and Hausdorff dimension see \cite{Falconer}. We endow the space $\{0,1\}^{\mathbb N}$ with metric $d$ defined by
\[
d(\underline a,\underline b)=2^{-\sup\{n:a_1\cdots a_{n}=b_1\cdots b_{n}\}}
\]
if $a_1=b_1$ and $d(\underline a,\underline b)=1$ otherwise. We denote by $|A|$ the diameter of the set $A$, i.e. the supremum of the set of distances between pairs of points in $A$. The diameter of a cylinder set $[a_1\cdots a_n]$ is $2^{-n}$. 

We recall that the density $h_{\beta}$ of $\nu_{\beta}$ is an $\mathcal L^1$ function defined almost everywhere which satisfyies equation \ref{heq}. Since $h_{\beta}$ is defined only almost everywhere, many of our statements about $h_{\beta}$ will hold almost everywhere. In particular, we say that $h_{\beta}$ is bounded if it is essentially bounded, i.e. if there exists a constant $c$ such that $\lambda\{x\in I_{\beta}:h_{\beta}(x)>c\}=0$. We have the following theorem.

\begin{theorem}\label{BetaThm}
The set $\mathcal E_{\beta}(x)$ of $\beta$-expansions of $x$ has positive $\left(\frac{\log(\frac{2}{\beta})}{\log(2)}\right)$-dimensional Hausdorff measure for Lebesgue almost every $x\in I_{\beta}$ if and only if the corresponding Bernoulli convolution $\nu_{\beta}$ is absolutely continuous with bounded density. In this case, normalised Hausdorff measure on the sets $\mathcal E_{\beta}(x)$ coincides with the measures $m_x$. 
\end{theorem}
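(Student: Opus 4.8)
The plan is to organise everything around the local density of $m_x$ with respect to $\mathcal H^s$. Writing $s=\frac{\log(2/\beta)}{\log 2}$ so that $2^{-s}=\frac\beta2$ and a length-$n$ cylinder satisfies $|[a_1\cdots a_n]|^s=(\beta/2)^n$, Proposition \ref{m1Prop} together with the formula for $m^1_x$ gives
\[
\frac{m_x([a_1\cdots a_n])}{|[a_1\cdots a_n]|^s}=\frac{h_\beta(T_{a_1\cdots a_n}(x))}{h_\beta(x)}.
\]
Since $T_{a_1\cdots a_n}(x)=\pi_\beta(\sigma^n\underline a)$ for $\underline a\in[a_1\cdots a_n]\cap\mathcal E_\beta(x)$, this ratio records the value of $h_\beta$ along the orbit of $x$. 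I would also record the matching self-similarity of Hausdorff measure: as $\sigma^n$ carries $[a_1\cdots a_n]\cap\mathcal E_\beta(y)$ onto $\mathcal E_\beta(T_{a_1\cdots a_n}(y))$ and multiplies distances by $2^n$, setting $g(y):=\mathcal H^s(\mathcal E_\beta(y))$ one gets $\mathcal H^s([a_1\cdots a_n]\cap\mathcal E_\beta(y))=(\beta/2)^n\,g(T_{a_1\cdots a_n}(y))$, the analogue of \ref{heq} summed over words. Crucially, $g$ is defined whether or not $\nu_\beta$ is absolutely continuous.

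For the direction \emph{absolutely continuous with bounded density $\Rightarrow$ positive measure}, I would assume $h_\beta\le c$ and use $h_\beta>0$ a.e.\ (from \cite{MauldinSimon}). The displayed ratio then gives $m_x([a_1\cdots a_n])\le \frac{c}{h_\beta(x)}|[a_1\cdots a_n]|^s$. Because the metric on $\Sigma$ is an ultrametric, every set of diameter at most $2^{-n}$ lies in a single cylinder of length $n$, so the mass distribution principle applies verbatim and yields $\mathcal H^s(\mathcal E_\beta(x))\ge h_\beta(x)/c>0$ for a.e.\ $x$.

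For the converse I would pass to the integrated fibre measure $\mu:=\int_{I_\beta}\mathcal H^s|_{\mathcal E_\beta(y)}\,d\lambda(y)$ on $\Sigma$. A slicing (Eilenberg) inequality for the H\"older map $\pi_\beta$ shows $g\in L^1(\lambda)$, so $\mu$ is finite. Using the self-similarity identity for $g$ and the change of variables $z=T_{a_1\cdots a_n}(y)$, which has constant Jacobian $\beta^n$ and carries the relevant subinterval of $I_\beta$ onto $I_\beta$, I would compute
\[
\mu([a_1\cdots a_n])=(\beta/2)^n\int_{I_\beta} g(T_{a_1\cdots a_n}(y))\,d\lambda(y)=(\beta/2)^n\beta^{-n}\|g\|_1=2^{-n}\|g\|_1,
\]
so that $\mu=\|g\|_1\,m$. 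Projecting by $\pi_\beta$ yields $g\,d\lambda=\pi_\beta\mu=\|g\|_1\,\nu_\beta$. As $g\,d\lambda$ is absolutely continuous, if $\nu_\beta$ were singular this forces $\|g\|_1=0$, i.e.\ $\mathcal H^s(\mathcal E_\beta(x))=0$ a.e.; contrapositively, $\mathcal H^s(\mathcal E_\beta(x))>0$ on a positive-measure set of $x$ forces $\|g\|_1>0$ and hence $\nu_\beta=\|g\|_1^{-1}g\,d\lambda$ is absolutely continuous. Disintegrating the two forms $\mu=\int\mathcal H^s|_{\mathcal E_\beta(y)}\,d\lambda$ and $\mu=\|g\|_1 m$ over the common projection $\pi_\beta\mu$ and using a.e.-uniqueness of disintegration in \ref{disintegration} then identifies $\mathcal H^s|_{\mathcal E_\beta(y)}=g(y)\,m_y$, which is exactly the statement that normalised Hausdorff measure on $\mathcal E_\beta(y)$ equals $m_y$.

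It remains to upgrade \emph{absolutely continuous} to \emph{bounded density}, and this is where I expect the real work. Once absolute continuity is known the system $(X,\phi,\lambda^2)$ is available: its first-coordinate marginal is $\nu_\beta$, it is ergodic (being isomorphic to the full shift), and the first coordinate of $\phi^n(P(\underline a))$ is $T_{a_1\cdots a_n}(x)$. Birkhoff's theorem then gives, for a.e.\ $x$ and $m_x$-a.e.\ $\underline a$, that the orbit $(T_{a_1\cdots a_n}(x))_n$ equidistributes for $\nu_\beta$, whence $\limsup_n h_\beta(T_{a_1\cdots a_n}(x))$ equals the essential supremum of $h_\beta$, so the upper $s$-density of $m_x$ equals $(\operatorname{ess\,sup}h_\beta)/h_\beta(x)$ at $m_x$-a.e.\ point. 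If $h_\beta$ were unbounded this density would be infinite a.e., and the upper-density half of the comparison between $\mathcal H^s$ and $m_x$ would force $\mathcal H^s(\mathcal E_\beta(x))=0$, contradicting positivity; hence $h_\beta$ is bounded. The main obstacle is precisely this step: the equidistribution is naturally $m_x$-almost everywhere while the conclusion concerns the whole fibre, so one must interchange the ``a.e.\ $x$''/``$m_x$-a.e.\ $\underline a$'' quantifiers through \ref{disintegration} and check that the $m_x$-null part of $\mathcal E_\beta(x)$ carries no $\mathcal H^s$-mass; the ultrametric, under which Hausdorff coverings may be taken to consist of cylinders, is what keeps this density comparison tractable.
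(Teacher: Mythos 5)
Your proposal is correct, and its overall architecture matches the paper's three-lemma proof: a lower bound from the cylinder-mass formula when $h_{\beta}$ is bounded, an identification of normalised $\mathcal H^{s}$ on fibres with $m_x$, and a recurrence-to-the-high-density-set argument to rule out unbounded density. The forward direction is Lemma \ref{Lem1} in mass-distribution-principle clothing (the paper computes $\sum_i|U_i|^{s}=C(\mathcal U)h_{\beta}(x)\geq h_\beta(x)/\mathrm{ess\,sup}\,h_\beta$ directly over cylinder covers; it is the same estimate, and your remark that the ultrametric lets one pass from arbitrary covers to cylinder covers is the same observation the paper makes). Where you genuinely diverge is the identification step: the paper shows $g(x)=\mathcal H^{s}(\mathcal E_{\beta}(x))$ satisfies the self-similarity relation \ref{heq}, puts $g$ in $L^1$ via the counting bound $\limsup_n(\beta/2)^n\mathcal N_n(x)\le 2h_{\beta}(x)$ from \cite{CountingBeta}, and invokes uniqueness of $L^1$ solutions of \ref{heq} to conclude $g=Kh_{\beta}$ --- a route that presupposes $h_{\beta}$ exists. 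Your pushforward computation $\mu=\int\mathcal H^{s}|_{\mathcal E_{\beta}(y)}\,d\lambda(y)=\|g\|_1 m$, hence $g\,d\lambda=\|g\|_1\nu_{\beta}$, buys strictly more: it derives absolute continuity of $\nu_{\beta}$ from positivity of $g$ on a positive-measure set, and so explicitly covers the singular case of the ``only if'' direction, which the paper's final lemma (stated only for ``$h_{\beta}$ unbounded'') leaves implicit. Two points to tighten. First, the Eilenberg inequality applies only after noting that $\pi_{\beta}$ is Lipschitz for the snowflaked metric $d^{\log\beta/\log 2}$; an elementary substitute is $g(y)\le\liminf_n(\beta/2)^n\mathcal N_n(y)$ together with $\int\mathcal N_n\,d\lambda=(2/\beta)^n\frac{1}{\beta-1}$ and Fatou, which gives $g\in L^1$ with no coarea machinery (and sidesteps some measurability worries). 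Second, the ``main obstacle'' you flag at the end is already resolved by your own identification $\mathcal H^{s}|_{\mathcal E_{\beta}(y)}=g(y)\,m_y$: once that holds, an $m_x$-null set is automatically $\mathcal H^{s}$-null, so the upper-density estimate applies on a set of full $\mathcal H^{s}$-measure and forces $\mathcal H^{s}(\mathcal E_{\beta}(x))=0$. This is precisely how the paper's final lemma proceeds, invoking the conclusions of Lemma \ref{gCor} to replace $m_x$ by normalised Hausdorff measure before building its cover from first entries to $\{h_{\beta}>C\}$; your appeal to the standard upper-density comparison is an equivalent, slightly slicker packaging of that cover construction.
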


This theorem is proved using equation \ref{heq}, which allows one a rather simple method of studing the sets $\mathcal E_{\beta}(x)$. We split the theorem into three lemmas. 

\begin{lemma}\label{Lem1}
If the Bernoulli convolution $\nu_{\beta}$ is absolutely continuous with bounded density then the set $\mathcal E_{\beta}(x)$ of $\beta$-expansions of $x$ has positive $\left(\frac{\log(\frac{2}{\beta})}{\log(2)}\right)$-dimensional Hausdorff measure for Lebesgue almost every $x\in I_{\beta}$.
\end{lemma}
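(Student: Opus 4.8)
The plan is to apply the mass distribution principle to the measure $m_x$, whose explicit form is supplied by Proposition \ref{m1Prop}. Write $s=\frac{\log(2/\beta)}{\log 2}$, so that $2^{-s}=\frac{\beta}{2}$ and the cylinder $[a_1\cdots a_n]$, of diameter $2^{-n}$, satisfies $|[a_1\cdots a_n]|^s=(\beta/2)^n$. By Proposition \ref{m1Prop} the disintegrated measure $m_x$ equals $m^1_x$, so that
\[
m_x[a_1\cdots a_n]=\left(\frac{\beta}{2}\right)^n\frac{h_\beta(T_{a_1\cdots a_n}(x))}{h_\beta(x)}=|[a_1\cdots a_n]|^s\,\frac{h_\beta(T_{a_1\cdots a_n}(x))}{h_\beta(x)}.
\]
The strategy is to bound the right-hand side above by a constant multiple of $|[a_1\cdots a_n]|^s$, uniformly in the cylinder, and then invoke the mass distribution principle to deduce positivity of $\mathcal H^s(\mathcal E_\beta(x))$.

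First I would use the hypotheses on $\nu_\beta$ to control the factors $h_\beta(T_{a_1\cdots a_n}(x))$ and $h_\beta(x)^{-1}$ for Lebesgue-almost every $x$. Let $c$ be an essential upper bound for $h_\beta$. For each fixed word $w=a_1\cdots a_n$ the map $T_w$ is affine with nonzero slope $\beta^n$, so it pulls the Lebesgue-null set $\{y:h_\beta(y)>c\}$ back to a Lebesgue-null set; taking the union over the countably many finite words produces a null set $N_1$ off which $h_\beta(T_w(x))\le c$ holds simultaneously for all $w$. For the denominator I would use that an absolutely continuous Bernoulli convolution is equivalent to Lebesgue measure on $I_\beta$ (\cite{MauldinSimon}), so that $h_\beta(x)>0$ outside a further null set $N_2$. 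Hence for every $x\notin N_1\cup N_2$ we obtain
\[
m_x[a_1\cdots a_n]\le \frac{c}{h_\beta(x)}\,|[a_1\cdots a_n]|^s
\]
for every cylinder, with finite constant $C_x:=c/h_\beta(x)$ independent of the cylinder.

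Next I would pass from cylinders to arbitrary subsets using the ultrametric structure of $\Sigma$. Any set $U$ with $2^{-(k+1)}<|U|\le 2^{-k}$ is contained in a single level-$k$ cylinder, so $m_x(U)\le C_x(2^{-k})^s\le C_x\frac{2}{\beta}\,|U|^s$, where the last inequality uses $|U|>2^{-(k+1)}$ together with $2^{-s}=\beta/2$. Thus $m_x$ satisfies the hypothesis of the mass distribution principle with constant $C'_x:=2C_x/\beta$, and since $m_x$ is a probability measure on $\mathcal E_\beta(x)$ we conclude
\[
\mathcal H^{s}(\mathcal E_\beta(x))\ge \frac{m_x(\mathcal E_\beta(x))}{C'_x}=\frac{\beta\,h_\beta(x)}{2c}>0
\]
for every $x\notin N_1\cup N_2$, that is, for Lebesgue-almost every $x\in I_\beta$.

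I expect the only genuinely delicate point to be the bookkeeping in the second paragraph: because $h_\beta$ is defined merely almost everywhere, one must exhibit a single null set outside of which the cylinder bound holds \emph{simultaneously} for all countably many words $w$, rather than merely for each word separately, and one must simultaneously exclude the set where the density vanishes. Once this uniform bound is in hand, the ultrametric reduction and the mass distribution principle are routine, and the whole argument rests on the explicit formula for $m_x$ from Proposition \ref{m1Prop} together with boundedness and positivity of $h_\beta$.
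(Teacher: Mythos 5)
Your proof is correct and is essentially the paper's argument in different packaging: the paper lower-bounds $\sum_i|U_i|^s$ over arbitrary cylinder covers directly via the iterated self-similarity identity $h_{\beta}(x)=\sum_i\left(\frac{\beta}{2}\right)^{n_i}h_{\beta}(T_{w_i}(x))$ together with the essential bound $h_{\beta}\leq c$, which is precisely the content of applying the mass distribution principle to $m_x=m^1_x$ as you do (the key inequality $m_x[w]\leq \frac{c}{h_\beta(x)}|[w]|^s$ is the same in both). Your explicit handling of the null sets $N_1$, $N_2$ and of the reduction from cylinders to arbitrary sets is, if anything, slightly more careful than the paper's, which simply asserts the pointwise bound on $h_\beta(T_w(x))$ and uses that $h_\beta>0$ almost everywhere.
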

\begin{proof}
Let $\tilde{\mathcal U}$ be a countable partition of $\{0,1\}^{\mathbb N}$ by cylinder sets $[a_{1}^i\cdots a_{n_i}^i]$ for $i\in\mathbb N$. We can iterate equation \ref{heq} to write
\[
h_{\beta}(x)=\sum_{a_{1}^i\cdots a_{n_i}^i\in\tilde{\mathcal U}} \left(\frac{\beta}{2}\right)^{n_i} h_{\beta}(T_{a_{1}^i\cdots a_{n_i}^i}(x)).
\]
Since $h_{\beta}(T_{a_{1}^i\cdots a_{n_i}^i}(x))=0$ whenever $T_{a_{1}^i\cdots a_{n_i}^i}\notin I_{\beta}$, we can remove those terms for which $T_{a_{1}^i\cdots a_{n_i}^i}\notin I_{\beta}$, or equivalently $[a_{1}^i\cdots a_{n_i}^i]\cap \mathcal E_{\beta}(x)=\phi$. Then letting
\[
\mathcal U=\{[a_{1}^i\cdots a_{n_i}^i]\in\tilde{\mathcal U}:[a_{1}^i\cdots a_{n_i}^i]\cap \mathcal E_{\beta}(x)\not=\phi\},
\]
the previous equation becomes 
\begin{equation}\label{heq2}
h_{\beta}(x)=\sum_{a_{1}^i\cdots a_{n_i}^i\in\mathcal U} \left(\frac{\beta}{2}\right)^{n_i} h_{\beta}(T_{a_{1}^i\cdots a_{n_i}^i}(x)),
\end{equation}
We stress that, since any open cover of $\mathcal E_{\beta}(x)$ can be obtained by taking a cover of $\{0,1\}^{\mathbb N}$ by cylinder sets and discarding those sets which don't intersect $\mathcal E_{\beta}(x)$, the above equation holds for all covers $\mathcal U$ of $\mathcal E_{\beta}(x)$ by cylinder sets.

Then for any disjoint cover $\mathcal U$ of $\mathcal E_{\beta}(x)$ by cylinder sets we have that
\begin{eqnarray*}
\sum_{a_{1}^i\cdots a_{n_i}^i\in\mathcal U} |[a_{1}^i\cdots a_{n_i}^i]|^{\left(\frac{\log(\frac{2}{\beta})}{\log(2)}\right)}&=&\sum_{a_{1}^i\cdots a_{n_i}^i\in\mathcal U} 2^{-\left(\frac{\log(\frac{2}{\beta})}{\log(2)}\right)n_i}\\
&=& \sum_{a_{1}^i\cdots a_{n_i}^i\in\mathcal U} \left(\frac{\beta}{2}\right)^{n_i}\\
&=& C(\mathcal U)\sum_{a_{1}^i\cdots a_{n_i}^i\in\mathcal U} \left(\frac{\beta}{2}\right)^{n_i}h_{\beta}(T_{a_{1}^i\cdots a_{n_i}^i}(x))\\
&=&C(\mathcal U)h_{\beta}(x),
\end{eqnarray*}
where 

\[C(\mathcal U):=\dfrac{\sum_{a_{1}^i\cdots a_{n_i}^i\in\mathcal U} \left(\frac{\beta}{2}\right)^{n_i}}{\sum_{a_{1}^i\cdots a_{n_i}^i\in\mathcal U} \left(\frac{\beta}{2}\right)^{n_i}h_{\beta}(T_{a_{1}^i\cdots a_{n_i}^i}(x))}.\] The final line here followed from equation \ref{heq2}. 

If $h_{\beta}$ is bounded then $\frac{1}{h_{\beta}(T_{a_{1}^i\cdots a_{n_i}^i}(x))}\geq C>0$ where $C:=\frac{1}{\text{ess}-\sup\{h(x):x\in I_{\beta}\}}$ is independent of $a_{1}^i\cdots a_{n_i}^i$ and $x$. Then $C(\mathcal U)\geq C$ and thus
\[
\sum_{a_{1}^i\cdots a_{n_i}^i\in\mathcal U} |[a_{1}^i\cdots a_{n_i}^i]|^{\left(\frac{\log(\frac{2}{\beta})}{\log(2)}\right)}\geq Ch(x)
\]
for any cover $\mathcal U$ of $\mathcal E_{\beta}(x)$. We conclude that 
\[
\mathcal H^{\left(\frac{\log(\frac{2}{\beta})}{\log(2)}\right)}(\mathcal E_{\beta}(x))\geq Ch_{\beta}(x)>0
\]
for all $x\in I_{\beta}$ such that $h(x)>0$, and in particular for almost all $x\in I_{\beta}$.\end{proof}

We define the measure $m^{2}_x$ on $\mathcal E_{\beta}(x)$ by
\[
m^{2}_x(A)=\frac{\mathcal H^{\left(\frac{\log(\frac{2}{\beta})}{\log(2)}\right)}(A)}{\mathcal H^{\left(\frac{\log(\frac{2}{\beta})}{\log(2)}\right)}(\mathcal E_{\beta}(x))}.
\]
This is well defined for almost every $x\in I_{\beta}$ whenever $h_{\beta}(x)$ is bounded. The second step of the proof of Theorem \ref{BetaThm} is the following.
\begin{lemma}\label{gCor}
The measures $m^{2}_x$ and $m^{1}_x$ are equal whenever they are both defined, i.e. whenever the Bernoulli convolution $\nu_{\beta}$ is absolutely continuous with bounded density.
\end{lemma}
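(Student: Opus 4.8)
The plan is to show that $m^1_x$ and $m^2_x$ assign the same mass to every cylinder set and then invoke the fact that two Borel probability measures on $\mathcal E_\beta(x)$ that agree on the cylinders (which form a generating $\pi$-system) must coincide. Throughout write $s=\frac{\log(2/\beta)}{\log 2}$ for brevity, so that $2^{-s}=\beta/2$ and $|[a_1\cdots a_n]|^s=(\beta/2)^n$, and set $g(x):=\mathcal H^{s}(\mathcal E_\beta(x))$.

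The first step is a scaling relation. For a fixed word $a_1\cdots a_n$ the shift $\sigma^n$ restricts to a bijection from $[a_1\cdots a_n]$ onto $\{0,1\}^{\mathbb N}$ which, by the definition of the metric $d$, multiplies all distances by $2^n$; hence it is a similarity of ratio $2^n$ and multiplies $s$-dimensional Hausdorff measure by $2^{ns}=(2/\beta)^n$. Since $\pi_\beta(\sigma^n\underline a)=T_{a_1\cdots a_n}(\pi_\beta(\underline a))$, this bijection carries $\mathcal E_\beta(x)\cap[a_1\cdots a_n]$ onto $\mathcal E_\beta(T_{a_1\cdots a_n}(x))$, so that
\[
\mathcal H^{s}\big(\mathcal E_\beta(x)\cap[a_1\cdots a_n]\big)=\Big(\tfrac{\beta}{2}\Big)^{n} g\big(T_{a_1\cdots a_n}(x)\big).
\]
Applying this with $n=1$ and using that $\mathcal E_\beta(x)\cap[0]$ and $\mathcal E_\beta(x)\cap[1]$ lie at distance $1$ apart, the additivity of Hausdorff measure over positively separated sets yields $g(x)=\frac{\beta}{2}\big(g(T_0(x))+g(T_1(x))\big)$; that is, $g$ satisfies exactly equation \ref{heq}.

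It remains to identify $g$ with a multiple of $h_\beta$. First I would check that $g$ is integrable: covering $\mathcal E_\beta(x)$ by the level-$n$ cylinders that meet it and integrating the resulting bound over $I_\beta$ gives $\int_{I_\beta} g\,d\lambda\le\liminf_n (\beta/2)^n\sum_{a_1\cdots a_n}\lambda\{x:T_{a_1\cdots a_n}(x)\in I_\beta\}\le\frac{1}{\beta-1}$, since each nonempty preimage has length at most $\beta^{-n}/(\beta-1)$ and there are at most $2^n$ words. By Lemma \ref{Lem1} we have $g\ge C h_\beta>0$ almost everywhere, so $\mu_g:=g\,d\lambda$ is a finite, nonzero measure. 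Because $g$ obeys \ref{heq} and the maps $T_i$ are affine with constant slope $\beta$, the measure $\mu_g$ satisfies the self similarity relation $\mu_g=\frac12(\mu_g\circ T_0+\mu_g\circ T_1)$; normalising and appealing to the uniqueness of $\nu_\beta$ as the only probability measure obeying this relation forces $\mu_g=c\,\nu_\beta$ for some constant $c>0$, i.e. $g=c\,h_\beta$ almost everywhere.

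Finally, combining the scaling relation with $g=c\,h_\beta$ gives
\[
m^{2}_x[a_1\cdots a_n]=\frac{\mathcal H^{s}(\mathcal E_\beta(x)\cap[a_1\cdots a_n])}{g(x)}=\Big(\tfrac{\beta}{2}\Big)^{n}\frac{g(T_{a_1\cdots a_n}(x))}{g(x)}=\Big(\tfrac{\beta}{2}\Big)^{n}\frac{h_\beta(T_{a_1\cdots a_n}(x))}{h_\beta(x)}=m^{1}_x[a_1\cdots a_n],
\]
the constant $c$ cancelling. As both are probability measures on $\mathcal E_\beta(x)$ agreeing on all cylinders, they are equal. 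I expect the main obstacle to be the middle step: making rigorous that $g$ genuinely inherits equation \ref{heq} (which rests on the metric-similarity scaling together with the additivity of $\mathcal H^{s}$ over separated sets) and that it is a \emph{finite} measure, so that uniqueness of the Bernoulli convolution can be applied to upgrade the one-sided bound $g\ge C h_\beta$ of Lemma \ref{Lem1} to the genuine proportionality $g=c\,h_\beta$.
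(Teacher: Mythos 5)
Your proof is correct and follows essentially the same route as the paper: derive the functional equation $g(x)=\frac{\beta}{2}(g(T_0(x))+g(T_1(x)))$ from the fact that $\sigma^n$ acts as a similarity of ratio $2^n$ on cylinders, show $g$ is a nonzero $\mathcal L^1$ function, invoke uniqueness of solutions of equation \ref{heq} to get $g=c\,h_{\beta}$, and then match cylinder masses. The only difference is in establishing $\int_{I_{\beta}}g\,d\lambda<\infty$: the paper quotes the pointwise bound $\limsup_n(\beta/2)^n\mathcal N_n(x;\beta)\le 2h_{\beta}(x)$ from Lemma 3.4 of the counting paper, whereas you use the cruder but self-contained estimate $(\beta/2)^n\sum_{a_1\cdots a_n}\lambda\{x:T_{a_1\cdots a_n}(x)\in I_{\beta}\}\le\frac{1}{\beta-1}$ together with Fatou, which suffices here and avoids the external citation.
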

\begin{proof}
We first observe that one has the bound 
\[
\mathcal H^{\left(\frac{\log(\frac{2}{\beta})}{\log(2)}\right)}(\mathcal E_{\beta}(x))\leq 2h_{\beta}(x)
\]
for $x\in I_{\beta}$. To prove this one takes the cover of $\mathcal E_{\beta}(x)$ by all cylinders of depth $n$ which intersect $\mathcal E_{\beta}(x)$. It was proved in \cite{CountingBeta} Lemma 3.4, following a similar argument in Appendix C of \cite{PollicottWeissDimensions}, that
\[
\limsup_{n\to\infty} \left(\frac{\beta}{2}\right)^n|\{a_1\cdots a_n\in\{0,1\}^n:[a_1\cdots a_n]\cap \mathcal E_{\beta}(x)\neq \phi\}|\leq 2h_{\beta}(x).
\]
Then for all $\epsilon>0$ we can by taking $n$ large enough find a cover $\mathcal U$ of $\mathcal E_{\beta}(x)$ by cylinder sets of depth $n$ for which
\[
\sum_{a_{1}\cdots a_{n}\in\mathcal U} |[a_{1}^i\cdots a_{n_i}^i]|^{\left(\frac{\log(\frac{2}{\beta})}{\log(2)}\right)}=|\mathcal U|\left(\frac{\beta}{2}\right)^n\leq 2h_{\beta}(x)+\epsilon.
\]
In particular, we see that 
\[
0<\int_{I_{\beta}}\mathcal H^{\left(\frac{\log(\frac{2}{\beta})}{\log(2)}\right)}(\mathcal E_{\beta}(x)) dx\leq 2.
\]
We define
\[
g(x):=\mathcal H^{\left(\frac{\log(\frac{2}{\beta})}{\log(2)}\right)}(\mathcal E_{\beta}(x)).
\]
Now given a cylinder $[a_1\cdots a_n]$ we have that \[|[a_1\cdots a_n]|=2^{-1}|[a_2\cdots a_n]|=2^{-1}|\sigma[a_1\cdots a_n]|.\] Then given any set $A$ which is contained in either $[0]$ or $[1]$ we have that
\begin{eqnarray*}
\mathcal H^{\left(\frac{\log(\frac{2}{\beta})}{\log(2)}\right)}(A)&=&2^{-\left(\frac{\log(\frac{2}{\beta})}{\log(2)}\right)}\mathcal H^{\left(\frac{\log(\frac{2}{\beta})}{\log(2)}\right)}(\sigma(A))\\&=&\frac{\beta}{2}\mathcal H^{\left(\frac{\log(\frac{2}{\beta})}{\log(2)}\right)}(\sigma(A)).
\end{eqnarray*}

The tree structure of the set of $\beta$-expansions means that
\begin{eqnarray*}
g(x)=\mathcal H^{\left(\frac{\log(\frac{2}{\beta})}{\log(2)}\right)}(\mathcal E_{\beta}(x))&=&\mathcal H^{\left(\frac{\log(\frac{2}{\beta})}{\log(2)}\right)}(\mathcal E_{\beta}(x)\cap[0])+\mathcal H^{\left(\frac{\log(\frac{2}{\beta})}{\log(2)}\right)}(\mathcal E_{\beta}(x)\cap[1])\\
&=& \frac{\beta}{2}\mathcal H^{\left(\frac{\log(\frac{2}{\beta})}{\log(2)}\right)}(\sigma(\mathcal E_{\beta}(x)\cap[0]))+\frac{\beta}{2}\mathcal H^{\left(\frac{\log(\frac{2}{\beta})}{\log(2)}\right)}(\sigma(\mathcal E_{\beta}(x)\cap[1]))\\
&=&\frac{\beta}{2}(g(T_0(x))+g(T_1(x))).
\end{eqnarray*}

Then $g(x)$ is an $\mathcal L^1$ function with positive integral satisfying equation \ref{heq}, and since $\mathcal L^1$ solutions to equation \ref{heq} are unique up to multiplication by constants we see that $g(x)=Kh_{\beta}(x)$ for some constant $K$. In particular, $m^{2}_x$ on $\mathcal E_{\beta}(x)$ assigns mass 
\[
\frac{\left(\frac{\beta}{2}\right)^n g(T_{a_1\cdots a_n}(x))}{g(x)}=\left(\frac{\beta}{2}\right)^n\frac{h_{\beta}(T_{a_1\cdots a_n}(x))}{h_{\beta}(x)}
\]
to cylinder $[a_1\cdots a_n]$ for any choice of $a_1\cdots a_n$, and thus the measures $m^{2}_x$ and $m^{1}_x$ coincide. By proposition \ref{m1Prop} we now have that all three measures $m_{x}, m^{1}_x$ and $m^{2}_x$ coincide when they are defined.
\end{proof}
The proof of the following lemma completes the proof of Theorem \ref{BetaThm}.

\begin{lemma}
If $h_{\beta}(x)$ is unbounded then $\mathcal H^{\left(\frac{\log(\frac{2}{\beta})}{\log(2)}\right)}(\mathcal E_{\beta}(x))=0$ for almost every $x\in I_{\beta}$.
\end{lemma}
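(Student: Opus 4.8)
Write $s=\frac{\log(2/\beta)}{\log(2)}$ for the exponent in question and set $g(x):=\mathcal H^{s}(\mathcal E_{\beta}(x))$. The plan is to argue by contradiction: assuming $g$ does not vanish almost everywhere, I will exhibit a local density of $\mathcal H^{s}|_{\mathcal E_{\beta}(x)}$ that the density theory of Hausdorff measure forbids. First I would record two facts that need no boundedness hypothesis. The opening of the proof of Lemma \ref{gCor} already gives $g(x)\le 2h_{\beta}(x)$, so $g\in\mathcal L^{1}$ and in particular $g(x)<\infty$ for a.e.\ $x$; and the tree-and-scaling computation there shows $g$ satisfies equation \ref{heq}. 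Since $\mathcal L^{1}$ solutions of \ref{heq} are unique up to a constant, $g=Kh_{\beta}$ for some $K\ge 0$. Thus $g$ is finite a.e., and if $g$ does not vanish a.e.\ then $K>0$ and $g(x)\in(0,\infty)$ for a.e.\ $x$. For such $x$ the normalised measure $m^{2}_{x}=\mathcal H^{s}|_{\mathcal E_{\beta}(x)}/g(x)$ is a genuine probability measure, and the cylinder computation of Lemma \ref{gCor}, which uses only $g=Kh_{\beta}$, gives $m^{2}_{x}=m^{1}_{x}=m_{x}$.

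Next I would compute the local upper $s$-density of $\mathcal H^{s}|_{\mathcal E_{\beta}(x)}$. Since the metric $d$ makes each cylinder $[a_{1}\cdots a_{n}]$ a ball of radius $2^{-n}$, and since the scaling relation gives $\mathcal H^{s}([a_{1}\cdots a_{n}]\cap\mathcal E_{\beta}(x))=(\beta/2)^{n}g(T_{a_{1}\cdots a_{n}}(x))$, the upper density at a point $\underline a\in\mathcal E_{\beta}(x)$ is
\[
\limsup_{n\to\infty}\frac{\mathcal H^{s}([a_{1}\cdots a_{n}]\cap\mathcal E_{\beta}(x))}{|[a_{1}\cdots a_{n}]|^{s}}=\limsup_{n\to\infty}g(T_{a_{1}\cdots a_{n}}(x))=K\,\limsup_{n\to\infty}h_{\beta}(T_{a_{1}\cdots a_{n}}(x)).
\]
The crux is to show this $\limsup$ is infinite almost everywhere. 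Here I would use the system $(X,\phi,\lambda^{2})$: it is ergodic (isomorphic to the full shift with Bernoulli measure), the first coordinate of $\phi^{n}(P(\underline a))$ is exactly $T_{a_{1}\cdots a_{n}}(x)$, and $\lambda^{2}$ is a probability measure whose first marginal is $\nu_{\beta}$. For every $M$ the set $\{h_{\beta}>M\}$ has positive $\nu_{\beta}$-measure precisely because $h_{\beta}$ is essentially unbounded, so Birkhoff's theorem applied to $\mathbf 1[h_{\beta}(x)>M]$ shows that $\lambda^{2}$-a.e.\ orbit has its first coordinate enter $\{h_{\beta}>M\}$ with positive frequency, hence infinitely often. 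Disintegrating $\lambda^{2}$ over the first coordinate by Fubini, exactly as in Proposition \ref{m1Prop}, transfers this to: for a.e.\ $x$ and $m_{x}$-a.e.\ $\underline a$ one has $\limsup_{n}h_{\beta}(T_{a_{1}\cdots a_{n}}(x))=\infty$. Because $m_{x}=m^{2}_{x}$ has the same null sets as $\mathcal H^{s}|_{\mathcal E_{\beta}(x)}$, the upper density above is infinite at $\mathcal H^{s}$-a.e.\ point of $\mathcal E_{\beta}(x)$.

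Finally I would invoke the standard upper-density comparison for Hausdorff measure: if a finite measure has upper $s$-density exceeding $c$ at every point of a set $A$, then $\mathcal H^{s}(A)$ is at most a fixed multiple of the measure of $A$ divided by $c$. Applying this with the measure $\mathcal H^{s}|_{\mathcal E_{\beta}(x)}$ to the set $A_{0}$ of full $\mathcal H^{s}$-measure on which the upper density is infinite yields $\mathcal H^{s}(A_{0})\le \mathrm{const}\cdot\mathcal H^{s}(A_{0})/c$ for every $c$, whence $\mathcal H^{s}(A_{0})=0$; since $\mathcal H^{s}(\mathcal E_{\beta}(x)\setminus A_{0})=0$ by construction, $g(x)=\mathcal H^{s}(\mathcal E_{\beta}(x))=0$, contradicting $g(x)>0$. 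Therefore $K=0$ and $g$ vanishes a.e., as claimed.

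I expect the main obstacle to be the careful bookkeeping linking the two halves of the argument: on the geometric side, confirming that cylinders genuinely serve as the balls in the density theorem and that the comparison lemma applies with the stated constants in the abstract metric space $(\{0,1\}^{\mathbb N},d)$; and on the dynamical side, the measure-theoretic care in passing from the $\lambda^{2}$-ergodic statement to the fibrewise ``$m_{x}$-a.e.\ branch'' statement through the disintegration. The ultrametric nature of $d$, under which any two balls are nested or disjoint, should make the Vitali-type covering behind the density theorem essentially automatic, so I anticipate that obstacle to be technical rather than conceptual.
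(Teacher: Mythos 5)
Your proposal is correct, and its skeleton coincides with the paper's: argue by contradiction, use the opening bound $g\le 2h_{\beta}$ and the scaling computation of Lemma \ref{gCor} to get $g=Kh_{\beta}$ and $m^{2}_{x}=m^{1}_{x}=m_{x}$, then use ergodicity of the shift to show that almost every branch visits the region $\{h_{\beta}>C\}$ infinitely often, and transfer this through the disintegration exactly as you describe (the paper phrases it with the sets $B_{C}$ and $B$ on $\Sigma$ rather than via Birkhoff on $(X,\phi,\lambda^{2})$, but it is the same step). Where you genuinely diverge is the endgame: the paper never invokes a density theorem; instead, for each $C$ it builds an explicit $\delta$-cover of $\mathcal E_{\beta}(x)\cap B$ by first-entry cylinders (the sets $A_{n,x}$, cylinders on which $h_{\beta}(T_{a_{1}\cdots a_{n}}(x))>C$) and feeds this cover back into the $C(\mathcal U)$ computation of Lemma \ref{Lem1}, obtaining the quantitative bound $\mathcal H^{s}(\mathcal E_{\beta}(x))\le h_{\beta}(x)/C$ directly. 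You instead compute the upper $s$-density of $\mathcal H^{s}|_{\mathcal E_{\beta}(x)}$ at cylinder-balls and cite the standard upper-density comparison theorem; in the ultrametric this is legitimate (your observation that nesting of cylinders trivialises the Vitali step is right, and the needed finiteness of the measure follows from $g\le 2h_{\beta}$), and one can view the paper's first-entry cover as exactly the disjoint subfamily that the density theorem's proof would select, so your route buys standardness at the cost of hiding the explicit bound. One small repair: the identity $g=Kh_{\beta}$ holds only Lebesgue-a.e., and $h_{\beta}$ is itself only an a.e.-defined density, so the pointwise equality $\limsup_{n}g(T_{a_{1}\cdots a_{n}}(x))=K\limsup_{n}h_{\beta}(T_{a_{1}\cdots a_{n}}(x))$ along a fixed orbit is not literally meaningful; the fix is to run your hitting argument directly with the sets $\{g>M\}$ (which have positive measure for every $M$ since $g=Kh_{\beta}$ a.e.\ with $K>0$ and $h_{\beta}$ essentially unbounded), which is actually cleaner because $g$, unlike $h_{\beta}$, is genuinely defined at every point. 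With that cosmetic change your argument is complete.
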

\begin{proof}
We stress that, since $h_{\beta}$ is defined only almost everywhere, we take the statement `$h_{\beta}$ is unbounded' to mean that for each $C\in\mathbb R$ the set $A_C:=\{x\in I_{\beta}:h_{\beta}(x)>C\}$ has positive Lebesgue measure.

We begin by supposing that 
\[g(x):=\mathcal H^{\left(\frac{\log(\frac{2}{\beta})}{\log(2)}\right)}(\mathcal E_{\beta}(x))\]
is positive for a positive Lebesgue measure set of $x\in I_{\beta}$. Then $g(x)$ is an $\mathcal L^1$ function of positive integral and the conclusions of Lemma \ref{gCor} hold. 

Now we define the set $B_C$ by
\[
B_C:=\{\underline a\in\Sigma: \pi_{\beta}(\underline a)\in A_C\}
\]
and see that $m(B_C)>0$. We let $B\subset \Sigma$ be the set of sequences $\underline a\in \Sigma$ such that $\sigma^n(\underline a)\in B_C$ infinitely often. Since the system $(\Sigma,\sigma,m)$ is ergodic we see that $m(B)=1$. In particular, $m_x(B^c)=0$ for almost every $x$, giving that \[\mathcal H^{\left(\frac{\log(\frac{2}{\beta})}{\log(2)}\right)}(\mathcal E_{\beta}(x)\cap B^c)=0\]
for almost every $x$. Here we have used the assumption that the conclusions of Lemma \ref{gCor} hold, allowing us to replace $m_x$ with normalised Hausdorff measure.

Now we let $\delta>0$ and let $N\in\mathbb N$ satisfy $2^{-N}<\delta$. For $n\geq N$ we define
\[
A_{n,x}:=\{\underline a\in\mathcal E_{\beta}(x):\sigma^n(\underline a)\in B_C, \underline a\not\in A_{N,x},\cdots A_{n-1,x}\}
\]
Each $A_{n,x}$ consists of a finite number of cylinder sets, and the union of these collections of cylinder sets over $n\geq N$ forms a $\delta$-cover of $\mathcal E_{\beta}(x)\cap B$. Furthermore, on each of these cylinder sets forming $A_{n,x}$ one has $h_{\beta}(\pi_{\beta}(\sigma^n(\underline a))>C$. Then letting $\mathcal U$ be the $\delta$-cover of $\mathcal E_{\beta}(x)\cap B$ using the cylinder sets in $A_{n,x}$ for $n\geq N$, we have
\[
\mathcal C(\mathcal U)<\frac{1}{C}.
\]
Using the final lines of the proof of Lemma \ref{Lem1}, we see that this gives
\begin{eqnarray*}
\mathcal H^{\left(\frac{\log(\frac{2}{\beta})}{\log(2)}\right)}(\mathcal E_{\beta}(x))&=&\mathcal H^{\left(\frac{\log(\frac{2}{\beta})}{\log(2)}\right)}(\mathcal E_{\beta}(x)\cap B^c)+\mathcal H^{\left(\frac{\log(\frac{2}{\beta})}{\log(2)}\right)}(\mathcal E_{\beta}(x)\cap B)\\
&\leq& 0+\frac{h_{\beta}(x)}{C}
\end{eqnarray*}
and since $C$ was arbitrary we are done.
\end{proof}

\section{Equidistribution Results}\label{EqSec}
In this section we use our understanding gained in the last section of the disintegration of $m$ on $\Sigma$ by the sets $\mathcal E_{\beta}(x)$ to turn some results of \cite{LPS} into equidistribution results for sets of $\beta$-expansions. It is likely that, by suitably adapting the results of \cite{LPS} to the case of projecting and slicing self similar sets, one could prove similar results for equidistribution of slices of fractals. Our main question is the following.

{\bf Question:} What can one say about the distribution of the multisets
\[
\mathcal O^n(x):=\{T_{a_1\cdots a_n}(x):[a_1\cdots a_n]\cap \mathcal E_{\beta}(x)\not=\phi\},
\]
where the multiplicity of $y\in\mathcal O^n(x)$ is defined as being equal to the number of words $a_1\cdots a_n$ for which $T_{a_1\cdots a_n}(x)=y$. In particular, what is the relationship between the limiting distribution of $\mathcal O^n(x)$ for typical $x$ and the question of the absolute continuity of $\nu_{\beta}$?

If $\beta$ is non algebraic then there do not exist words $a_1\cdots a_n\neq b_1\cdots b_n\in\{0,1\}^n$ such that $T_{a_1\cdots a_n}(x)=T_{b_1\cdots b_n}(x)$, and thus the multiplicity of elements of $\mathcal O^n(x)$ is always equal to $1$.

We define
\[
\mathcal N_n(x;\beta):=|\mathcal O^n(x)|=\left|\{a_1\cdots a_n\in\{0,1\}^n:T_{a_1\cdots a_n}(x)\in I_{\beta}\}\right|
\]
In \cite{CountingBeta} we were able to link the growth rate of $\mathcal N_n(x;\beta)$ for typical $x\in I_{\beta}$ with the question of the absolute continuity of $\nu_{\beta}$. In particular we defined
\[
\overline f(x):=\limsup_{n\to\infty}\left(\frac{\beta}{2}\right)^n\mathcal N_n(x;\beta)
\]
and $\underline f(x)$ as above but with the $\limsup$ replaced by a $\liminf$. We proved that if either $\overline f$ or $\underline f$ were $\mathcal L^1$ functions with positive integral then $\nu_{\beta}$ is absolutely continuous. We conjectured that for absolutely continuous $\nu_{\beta}$ one has $\overline f =\underline f$.

We are interested in the extent to which equidistribution of $\mathcal O^n(x)$ is implied by the absolute continuity of $\nu_{\beta}$. We are not able to answer this question, but we can at least show that equidistribution is typical for $\beta\in(1,\sqrt{2})$. The following theorem is a restatement in our language of Theorem 1.2 of \cite{LPS}.

\begin{theorem}\label{LPSThm}[Lindenstrauss, Peres and Schlag]
For almost every $\beta\in(1,2)$, for each $a_1\cdots a_m\in\{0,1\}^m$ and for almost every $x\in I_{\beta}$ we have that \[m_x\{\underline w \in\mathcal E_{\beta}(x):\sigma^n(\underline w)\in[a_1\cdots a_m]\}\to_{n\to\infty} 2^{-m}.\]
\end{theorem}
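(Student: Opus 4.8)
The plan is to recognise Theorem \ref{LPSThm} as a direct translation of the convergence statement of \cite{LPS}, the only work being to express the conditional probability on the left-hand side through the explicit density formula for $m_x$ established in Proposition \ref{m1Prop}. By definition of the disintegration \ref{disintegration}, the quantity $m_x\{\underline w\in\mathcal E_\beta(x):\sigma^n(\underline w)\in[a_1\cdots a_m]\}$ is precisely the conditional expectation, given $\pi_\beta(\underline w)=x$, of the function $\underline w\mapsto \mathds{1}_{[a_1\cdots a_m]}(\sigma^n \underline w)$ on $(\Sigma,m)$. Since $\sigma$ preserves $m$ this conditional expectation has unconditional average $2^{-m}$, so the substance of the theorem is that these fibrewise averages equidistribute to their mean for almost every $x$.

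First I would expand the left-hand side using $m_x=m^1_x$ and the cylinder formula $m_x[b_1\cdots b_k]=(\beta/2)^k h_\beta(T_{b_1\cdots b_k}(x))/h_\beta(x)$. Partitioning $\{\underline w\in\mathcal E_\beta(x):\sigma^n(\underline w)\in[a_1\cdots a_m]\}$ according to the prefix $b_1\cdots b_n=w_1\cdots w_n$, and using $T_{b_1\cdots b_n a_1\cdots a_m}(x)=T_{a_1\cdots a_m}(T_{b_1\cdots b_n}(x))$ together with the fact that terms whose extended cylinder misses $\mathcal E_\beta(x)$ contribute $h_\beta=0$, a short computation gives
\[
m_x\{\underline w:\sigma^n(\underline w)\in[a_1\cdots a_m]\}=\left(\frac{\beta}{2}\right)^m\frac{1}{h_\beta(x)}\sum_{b_1\cdots b_n}\left(\frac{\beta}{2}\right)^n h_\beta\big(T_{a_1\cdots a_m}(T_{b_1\cdots b_n}(x))\big).
\]
The inner sum is the $n$-th iterate at $x$ of the transfer operator $\mathcal L\psi=\frac{\beta}{2}(\psi\circ T_0+\psi\circ T_1)$ applied to the test function $\psi=h_\beta\circ T_{a_1\cdots a_m}$; the choice $\psi=h_\beta$ recovers exactly the fixed-point identity \ref{heq2}, which is why the corresponding sum returns $h_\beta(x)$.

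The heart of the proof is then to invoke \cite{LPS}, whose Theorem 1.2 should furnish, for almost every $\beta$ and almost every $x$, the convergence of these iterates to the projection onto the leading eigendensity, namely $\mathcal L^n\psi(x)\to\big(\int_{\mathbb R}\psi\,d\lambda\big)\,h_\beta(x)$. Here the projection constant is $\int_{\mathbb R} h_\beta(T_{a_1\cdots a_m}(t))\,d\lambda(t)=\beta^{-m}$, obtained from the affine change of variables $t\mapsto T_{a_1\cdots a_m}(t)$ of scaling factor $\beta^{-m}$ together with $\int h_\beta\,d\lambda=1$ (consistency is guaranteed because $\mathcal L$ preserves $\int_{\mathbb R}(\cdot)\,d\lambda$). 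Substituting this limit into the displayed identity yields the limit $(\beta/2)^m\cdot\beta^{-m}=2^{-m}$, as required, and I would close by checking that the two almost-everywhere quantifiers and the normalisation of the invariant density match \cite{LPS} exactly.

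I expect the main obstacle to be bookkeeping rather than anything conceptual: one must verify that the object whose almost-everywhere convergence \cite{LPS} actually establishes coincides, after the substitution above, with the transfer-operator iterate $\mathcal L^n(h_\beta\circ T_{a_1\cdots a_m})$, including the correct normalisation of the limiting density and the correct projection constant $\int\psi\,d\lambda$. A secondary point needing care is that $h_\beta$ is defined only almost everywhere, so both the formula for $m_x$ and the pointwise limit hold only for almost every $x$; this is harmless since the conclusion is itself an almost-everywhere statement, but the exceptional null set of $x$ must be tracked and absorbed into the one already produced by \cite{LPS}.
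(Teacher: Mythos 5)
The paper offers no proof of this statement at all --- it is presented verbatim as ``a restatement in our language of Theorem 1.2 of \cite{LPS}'' --- so your proposal, which likewise outsources the almost-everywhere convergence entirely to \cite{LPS}, is essentially the same approach, and your cylinder decomposition, the identification of the fibre average with $\left(\frac{\beta}{2}\right)^m\mathcal L^n(h_{\beta}\circ T_{a_1\cdots a_m})(x)/h_{\beta}(x)$, and the evaluation $\int h_{\beta}\circ T_{a_1\cdots a_m}\,d\lambda=\beta^{-m}$ are all correct. One caveat: the ``convergence to the leading eigendensity'' that you invoke as the key input is, by your own displayed identity, exactly equivalent to the theorem's conclusion (Theorem 1.2 of \cite{LPS} is stated in the conditional-measure form, not the transfer-operator form), so your argument is a correct reformulation of the citation rather than independent proof content.
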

Given an interval $A\subset I_{\beta}$ and $\epsilon>0$, we can approximate $A$ below by a finite collection $\mathcal U_1$ of disjoint cylinder sets such that \[\sum_{[a_1\cdots a_m]\in\mathcal U_1}m[a_1\cdots a_m]>\nu_{\beta}(A)-\epsilon\] and $\pi_{\beta}[a_1\cdots a_m]\subset A$ for each $[a_1\cdots a_m]\in\mathcal U_1$. Similarly we can approximate $A$ from above with a collection $\mathcal U_2$ of cylinder sets such that \[\sum_{[a_1\cdots a_m]\in\mathcal U_2}m[a_1\cdots a_n]<\nu_{\beta}(A)+\epsilon\] and \[\pi_{\beta}(\underline a)\in A\implies \underline a\in \bigcup_{[a_1\cdots a_m]\in\mathcal U_2}[a_1\cdots a_m].\]
Then an immediate corollary to Theorem \ref{LPSThm} is that for almost every $\beta\in(1,2),x\in I_{\beta}$ and for each interval $A\subset I_{\beta}$ we have that
\[
m_x\{\underline w \in\{0,1\}^{\mathbb N}:\pi_{\beta}(\sigma^n(\underline w))\in A\}\to_{n\to\infty} \nu_{\beta}(A).
\]
Equivalently,
\begin{cor}\label{LPSCor}
For almost every $\beta\in(1,2)$ and for almost every $x\in I_{\beta}$ the probability measures
\[
\nu_{n,x}:=\sum_{a_1\cdots a_n\in\{0,1\}^n}\delta_{T_{a_1\cdots a_n}(x)}m_x[a_1\cdots a_n]
\]
converge weak$^*$ to $\nu_{\beta}$ as $n\to\infty$. 
\end{cor}

This is an equidistribution result stated in terms of conditional measures, and was well suited to the purposes of \cite{LPS} as it allowed them to answer an old question of Sinai and Rokhlin about conditional entropy. However if one is interested in the distribution of the sets $\mathcal O^n(x)$ it would be more natural to seek equidistribution results that did not depend on the conditional measures $m_x$. We define probability measures
\[
\mu_{n,x}:=\frac{1}{\mathcal N_n(x;\beta)}\sum_{y\in\mathcal O^n(x)}\delta_y
\]
and have the following theorem.
\begin{theorem}\label{EquiThm}
For almost every $\beta\in(1,\sqrt{2})$, we have that
\[
\mu_{n,x}\to\lambda|_{I_{\beta}}
\]
weakly as $n\to\infty$ for almost every $x\in I_{\beta}$.
\end{theorem}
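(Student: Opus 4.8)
The plan is to realise $\mu_{n,x}$ as a reweighting of the measures $\nu_{n,x}$ of Corollary \ref{LPSCor}, for which equidistribution to $\nu_\beta$ is already known, and then to divide out the density $h_\beta$. Recall from Proposition \ref{m1Prop} and Lemma \ref{gCor} that $m_x[a_1\cdots a_n]=\left(\frac{\beta}{2}\right)^n h_\beta(T_{a_1\cdots a_n}(x))/h_\beta(x)$, so $\nu_{n,x}$ places mass proportional to $h_\beta(y)$ at each $y\in\mathcal O^n(x)$, whereas $\mu_{n,x}$ places equal mass at each such $y$. Hence, at least formally, $\mu_{n,x}$ is obtained from $\nu_{n,x}$ by multiplying the mass at $y$ by $1/h_\beta(y)$ and renormalising; since $\nu_\beta$ has density $h_\beta$, dividing $\nu_\beta$ by $h_\beta$ returns Lebesgue measure, which is exactly the desired limit.

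To make this precise I would fix a continuous $f\colon I_\beta\to\mathbb R$ and compute, using the formula for $m_x$,
\[
\int_{I_\beta}\frac{f}{h_\beta}\,d\nu_{n,x}=\left(\frac{\beta}{2}\right)^n\frac{1}{h_\beta(x)}\sum_{y\in\mathcal O^n(x)}f(y)=\left(\frac{\beta}{2}\right)^n\frac{\mathcal N_n(x;\beta)}{h_\beta(x)}\int_{I_\beta}f\,d\mu_{n,x}.
\]
Taking $f$ supported in the interior of $I_\beta$, the function $f/h_\beta$ is bounded and continuous; this is where I use that $\nu_\beta$ has a continuous, strictly positive interior density, valid for almost every $\beta\in(1,\sqrt2)$ by Solomyak's theorem, and it is the reason for restricting to $(1,\sqrt2)$. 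Corollary \ref{LPSCor} (full weak-$*$ convergence $\nu_{n,x}\to\nu_\beta$, which holds for almost every such $\beta$ and $x$ and which I apply to a countable dense family of test functions) then gives $\int (f/h_\beta)\,d\nu_{n,x}\to\int (f/h_\beta)\,d\nu_\beta=\int f\,d\lambda$. Writing $Q_n:=\left(\frac{\beta}{2}\right)^n\mathcal N_n(x;\beta)/h_\beta(x)$, this yields $Q_n\int f\,d\mu_{n,x}\to\int f\,d\lambda$ for every interior $f$. I would then pass to weak-$*$ subsequential limits $\mu_{n_k,x}\to\eta$, which exist and are probability measures since $I_\beta$ is compact; along a further subsequence $Q_{n_k}\to Q_*\in(0,\infty]$, and the display forces $Q_*\int f\,d\eta=\int f\,d\lambda$ for all interior $f$, so $\eta$ has constant density $1/Q_*$ on the interior. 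Identifying $Q_*=\lambda(I_\beta)$ and excluding atoms of $\eta$ at the two endpoints then pins $\eta$ down as normalised Lebesgue measure, and independence of the subsequence gives $\mu_{n,x}\to\lambda|_{I_\beta}$.

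The main obstacle is precisely the behaviour near the endpoints $0$ and $\frac{1}{\beta-1}$ of $I_\beta$. The scaling relation $h_\beta(x)=\frac{\beta}{2}h_\beta(\beta x)$ (the boundary case of equation \ref{heq}) shows that $h_\beta$ decays to $0$ there, so $1/h_\beta$ is unbounded and cannot be fed directly into Corollary \ref{LPSCor}; equivalently, one must show $\mu_{n,x}$ does not concentrate mass at the endpoints, i.e. that $\eta$ carries no endpoint atoms and $Q_*=\lambda(I_\beta)$. I expect this to be the crux, and would attack it through the uniform-integrability estimate $\lim_{\delta\to0}\limsup_{n\to\infty}\mu_{n,x}\big([0,\delta]\cup[\tfrac{1}{\beta-1}-\delta,\tfrac{1}{\beta-1}]\big)=0$. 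To bound the proportion of $y\in\mathcal O^n(x)$ within $\delta$ of an endpoint, I would again exploit the self-similar scaling: an admissible word whose image lies near $0$ is forced to end in a long block driving the orbit into a neighbourhood of $0$, and the contribution of such words can be controlled by the counting estimate $\limsup_k\left(\frac{\beta}{2}\right)^k\mathcal N_k(y;\beta)\le C\,h_\beta(y)$ from the proof of Lemma \ref{gCor} together with the smallness of $h_\beta$ near the endpoints. Establishing this near-boundary bound, and thereby pinning the growth rate $\left(\frac{\beta}{2}\right)^n\mathcal N_n(x;\beta)$ to $\lambda(I_\beta)\,h_\beta(x)$, is the delicate step; the remaining reweighting and subsequence arguments are soft.
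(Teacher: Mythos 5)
Your overall strategy coincides with the paper's: invoke Corollary \ref{LPSCor} together with continuity and strict interior positivity of $h_{\beta}$ (for the positivity the paper cites \cite{JordanShmerkinSolomyak}, not Solomyak --- a minor misattribution), convert the $m_x$-weighted equidistribution into counting statements for $\mathcal O^n(x)$, and reduce the theorem to interior control plus endpoint non-concentration, which together pin the growth rate $\left(\frac{\beta}{2}\right)^n\mathcal N_n(x;\beta)$; your quantity $Q_n$ converging is exactly the paper's Proposition \ref{GrowthProp}. Your functional reformulation --- testing $\nu_{n,x}$ against $f/h_{\beta}$ for interior-supported $f$ and passing to subsequential limits --- is a clean repackaging of the paper's two-sided interval estimates (equations \ref{hratio} and \ref{OnxUB}), and those soft steps of yours are correct; your conclusion that the limit is \emph{normalised} Lebesgue measure with $Q_*=\lambda(I_{\beta})$ is in fact the internally consistent reading of the statement.

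The gap sits precisely at the step you flag as the crux, because the tools you name there cannot close it. The estimate $\limsup_k\left(\frac{\beta}{2}\right)^k\mathcal N_k(y;\beta)\leq 2h_{\beta}(y)$ from the proof of Lemma \ref{gCor} bounds the forward branching of expansions \emph{of $y$}; it says nothing about how many elements of $\mathcal O^n(x)$ lie in $[0,\delta)$, which is what $\mu_{n,x}$ weighs. Nor does ``smallness of $h_{\beta}$ near the endpoints'' help as invoked: it cuts the wrong way, since the counting measure $\mu_{n,x}$ weighs points equally and thereby exactly undoes the $h_{\beta}$-suppression that makes $\nu_{n,x}([0,\delta))$ small --- this is why no weak-$*$ consequence of Corollary \ref{LPSCor} alone, with $1/h_{\beta}$ blowing up at the endpoints, can bound the count there. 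What the paper actually uses in proving Proposition \ref{GrowthProp} is the exact self-similar recursion
\[
|\mathcal O^{n}(x)\cap[0,\delta)|=\left|\mathcal O^{n-1}(x)\cap\left[0,\tfrac{\delta}{\beta}\right)\right|+\left|\mathcal O^{n-1}(x)\cap\left[\tfrac{1}{\beta},\tfrac{1+\delta}{\beta}\right)\right|,
\]
in which the switch-window term lies compactly in the interior and is controlled by the interior upper bound (equation \ref{EQ6}); choosing $K>\frac{2}{2-\beta}$, any excess of $|\mathcal O^{n_k}(x)\cap[0,\delta)|$ above $K\delta h_{\beta}(x)\left(\frac{2}{\beta}\right)^{n_k}$ then propagates backwards and forces the value $0$ to occur in $\mathcal O^N(x)$ with multiplicity, contradicting non-algebraicity of $\beta$. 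Your ``long block of $0$s'' heuristic is the right mechanism --- indeed, decomposing each word by its last passage through the switch window and summing \ref{EQ6}-type bounds over the block length would yield $|\mathcal O^n(x)\cap[0,\delta)|\leq C\delta h_{\beta}(x)\left(\frac{2}{\beta}\right)^n$ directly, even avoiding the non-algebraicity contradiction, provided you separately handle the finitely many small times at which \ref{EQ6} is not yet valid --- but as written your proposal substitutes an inapposite lemma for this argument, so the decisive estimate remains unproven.
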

We conjecture that the conclusions of this theorem hold whenever $\nu_{\beta}$ is absolutely continuous.\footnote{Some progress in this direction was announced by C. Bandt at a recent conference in Hong Kong, at time of writing no preprint is availible.}

Given the description of the measures $m_x$ in the earlier sections, it seems natural that Theorem \ref{EquiThm} follows from Corollary \ref{LPSCor}. In some sense, all we are doing is dividing by the density $h_{\beta}(x)$ to turn $\nu_{\beta}$ into $\lambda|_{I_{\beta}}$ on the right hand side and $\nu_{n,x}$ into $\mu_{n,x}$ on the left. However we have to do this formally, and also to be careful to ensure that too much of $\mu_{n,x}$ isn't concentrated at the edges of $I_{\beta}$. We prove Theorem \ref{EquiThm}.

\begin{proof}
We assume that $\beta$ is such that $\nu_{\beta}$ is absolutely continuous with continuous density $h_{\beta}$ which is strictly positive on $(0,\frac{1}{\beta-1})$ and that $\beta$ satisfies the conclusions of Corollary \ref{LPSCor}. This holds for almost every $\beta\in(1,\sqrt{2})$, the fact that $h_{\beta}$ is strictly positive on $(0,\frac{1}{\beta-1})$ for almost every $\beta\in(1,\sqrt{2})$ was proved in \cite{JordanShmerkinSolomyak}. 

Now let $A\subset I_{\beta}$ be such that there exists a constant $h_{\beta}(A)$ for which 
\begin{equation}\label{hratio}
h_{\beta}(A)(1-\epsilon)<h_{\beta}(x)<h_{\beta}(A)(1+\epsilon)
\end{equation}
for each $x\in A$. Then we have that
\begin{eqnarray*}
\sum_{a_1\cdots a_n\in\{0,1\}^n}m_x[a_1\cdots a_n]\chi_A(T_{a_1\cdots a_n}(x))&=&\sum_{a_1\cdots a_n\in\{0,1\}^n}\left(\frac{\beta}{2}\right)^n\frac{h_{\beta}(T_{a_1\cdots a_n}(x))}{h_{\beta}(x)}\chi_A(T_{a_1\cdots a_n}(x))\\
&\leq& \left(\frac{\beta}{2}\right)^n \frac{h_{\beta}(A)(1+\epsilon)}{h_{\beta}(x)}|\mathcal O^n(x)\cap A|.
\end{eqnarray*}
Now Corollary \ref{LPSCor} says that 
\[
\sum_{a_1\cdots a_n\in\{0,1\}^n}m_x[a_1\cdots a_n]\chi_A(T_{a_1\cdots a_n}(x))\to_{n\to\infty} \nu_{\beta}(A)\geq \lambda(A)h_{\beta}(A)(1-\epsilon)
\]
as $n\to\infty$. Then using the fact that $m_x=m^1_x$ we have for sufficiently large $n$ that

\begin{eqnarray*}
\lambda(A)h_{\beta}(A)(1-\epsilon)^2\leq \left(\frac{\beta}{2}\right)^n \frac{h_{\beta}(A)(1+\epsilon)}{h_{\beta}(x)}|\mathcal O^n(x)\cap A|,
\end{eqnarray*}
giving
\[
|\mathcal O^n(x)\cap A|\geq \lambda(A)h_{\beta}(x)\left(\frac{2}{\beta}\right)^n\frac{(1-\epsilon)^2}{1+\epsilon}.
\]
Similarly,
\begin{equation}\label{OnxUB}
|\mathcal O^n(x)\cap A|\leq \lambda(A)h_{\beta}(x)\left(\frac{2}{\beta}\right)^n\frac{(1+\epsilon)^2}{1-\epsilon}.
\end{equation}

The following proposition will be proved at the end of this theorem.
\begin{prop}\label{GrowthProp}
For almost all $\beta\in(1,\sqrt{2})$ and for almost all $x\in I_{\beta}$ we have that
\[
\lim_{n\to\infty} \left(\frac{\beta}{2}\right)^n |\mathcal O^n(x)|\to h_{\beta}(x)
\]
\end{prop}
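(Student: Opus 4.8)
The plan is to identify the limit by decomposing $|\mathcal O^n(x)|=\sum_j|\mathcal O^n(x)\cap A_j|$ over a partition $\{A_j\}$ of $I_\beta$ and summing the pointwise asymptotics already produced inside the proof of Theorem \ref{EquiThm}; these assemble to $h_\beta(x)\,\lambda(I_\beta)$. Recall that on any $A$ on which $h_\beta$ is squeezed between $h_\beta(A)(1-\epsilon)$ and $h_\beta(A)(1+\epsilon)$, the two-sided estimates culminating in (\ref{OnxUB}) give $\left(\frac{\beta}{2}\right)^n|\mathcal O^n(x)\cap A|\to h_\beta(x)\lambda(A)$ as $\epsilon\to 0$, the density value $h_\beta(A)$ cancelling. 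First I would fix $\delta>0$ and, using that $h_\beta$ is continuous and strictly positive on $(0,\frac{1}{\beta-1})$ (Solomyak together with \cite{JordanShmerkinSolomyak}), partition the compact interval $[\delta,\frac{1}{\beta-1}-\delta]$ into finitely many pieces on each of which $h_\beta$ varies by a factor at most $1+\epsilon$ and is bounded below. Summing the per-piece limits over this finite partition gives
\[
\left(\tfrac{\beta}{2}\right)^n\,\bigl|\mathcal O^n(x)\cap[\delta,\tfrac{1}{\beta-1}-\delta]\bigr|\ \longrightarrow\ h_\beta(x)\bigl(\tfrac{1}{\beta-1}-2\delta\bigr).
\]

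Dropping the two end intervals immediately gives $\liminf_n(\frac{\beta}{2})^n|\mathcal O^n(x)|\geq h_\beta(x)(\frac{1}{\beta-1}-2\delta)$, and letting $\delta\to 0$ yields the lower bound $\liminf_n(\frac{\beta}{2})^n|\mathcal O^n(x)|\geq h_\beta(x)\lambda(I_\beta)$. The matching upper bound is where the real work lies, and it is the main obstacle: near the endpoints the density degenerates ($h_\beta(0)=h_\beta(\frac{1}{\beta-1})=0$), so no finite partition into nearly-constant pieces reaches the very tip, and the mass estimate behind (\ref{OnxUB}) becomes vacuous there since it is driven by a lower bound on $h_\beta$. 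I would therefore control the tip dynamically. Since any $\beta$-expansion of a point of $[0,\frac{1}{\beta})$ must begin with the digit $0$, splitting a length-$n$ word according to its last digit gives, for small $\eta$, the exact recursion
\[
|\mathcal O^n(x)\cap[0,\eta]|=|\mathcal O^{n-1}(x)\cap[0,\tfrac{\eta}{\beta}]|+|\mathcal O^{n-1}(x)\cap[\tfrac{1}{\beta},\tfrac{1+\eta}{\beta}]|,
\]
in which only the first term is again a tip count, while the second lives in the interior and is controlled by the per-piece estimate, contributing $\approx\frac12 h_\beta(x)\eta$ after the normalisation $(\frac{\beta}{2})^n$.

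Writing $\Phi(\eta):=\limsup_n(\frac{\beta}{2})^n|\mathcal O^n(x)\cap[0,\eta]|$ and taking $\limsup$ in the recursion gives $\Phi(\eta)\leq\frac{\beta}{2}\Phi(\frac{\eta}{\beta})+\frac12 h_\beta(x)\eta$. Iterating $k$ times and summing the resulting geometric series yields $\Phi(\eta)\leq(\frac{\beta}{2})^k\Phi(\frac{\eta}{\beta^k})+h_\beta(x)\eta$; since $\frac{\beta}{2}<1$ the residual tip term vanishes as $k\to\infty$ once $\Phi$ is known to be finite, leaving $\Phi(\eta)\leq h_\beta(x)\eta$. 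By the reflection symmetry of $\nu_\beta$ the same bound holds at the right endpoint, so the two tails together contribute at most $2h_\beta(x)\eta$ to the relevant $\limsup$. Combining this with the interior asymptotic and letting $\delta$ and $\eta$ tend to $0$ gives $\limsup_n(\frac{\beta}{2})^n|\mathcal O^n(x)|\leq h_\beta(x)\lambda(I_\beta)$, matching the lower bound and hence establishing convergence to $h_\beta(x)\lambda(I_\beta)$.

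The one input still needed is the a priori finiteness of $\Phi(\tfrac{1}{\beta-1})=\limsup_n(\frac{\beta}{2})^n|\mathcal O^n(x)|$, which I would take from the covering bound of \cite{CountingBeta} quoted in the proof of Lemma \ref{gCor} (only its finiteness, not its precise constant, is used). As a clean consistency check on the value of the limit, note the exact identity $\int_{I_\beta}(\frac{\beta}{2})^n|\mathcal O^n(x)|\,dx=\lambda(I_\beta)$, valid for every $n$: for each word $w$ of length $n$ one has $\{x\in I_\beta:T_w(x)\in I_\beta\}=\pi_\beta([w])$, an interval of length $\beta^{-n}\lambda(I_\beta)$, and there are $2^n$ such words; together with Fatou this forces the integral of the pointwise limit to equal $\lambda(I_\beta)=\int_{I_\beta}h_\beta(x)\lambda(I_\beta)\,dx$, in agreement with the value found above.
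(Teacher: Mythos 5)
Your proof is correct and shares the paper's overall architecture --- both arguments take the two-sided per-piece estimates culminating in equation (\ref{OnxUB}) from the proof of Theorem \ref{EquiThm}, sum them over a finite partition of $[\delta,\frac{1}{\beta-1}-\delta]$ on which $h_{\beta}$ is nearly constant, and both control the endpoint mass via the same exact recursion $|\mathcal O^n(x)\cap[0,\eta]|=|\mathcal O^{n-1}(x)\cap[0,\frac{\eta}{\beta}]|+|\mathcal O^{n-1}(x)\cap[\frac{1}{\beta},\frac{1+\eta}{\beta}]|$ --- but you resolve the endpoint step by a genuinely different mechanism. The paper argues by contradiction: it assumes $\beta$ non-algebraic, supposes the tip carries normalised mass exceeding $K\delta$ with $K>\frac{2}{2-\beta}$ along a subsequence, propagates the excess down the recursion, and concludes that the multiset $\mathcal O^N(x)$ would have to contain the value $0$ with multiplicity greater than one, which is impossible for non-algebraic $\beta$. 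You instead iterate the limsup inequality $\Phi(\eta)\leq\frac{\beta}{2}\Phi(\frac{\eta}{\beta})+\frac{1}{2}h_{\beta}(x)\eta$ directly, sum the geometric series, and kill the residual term using $\frac{\beta}{2}<1$ together with the a priori finiteness of $\limsup_n\left(\frac{\beta}{2}\right)^n\mathcal N_n(x;\beta)$ from \cite{CountingBeta}. Your version buys two things: it dispenses with the non-algebraicity assumption entirely (the paper's proof needs it; yours needs only the hypotheses already in force for Theorem \ref{EquiThm} plus the quoted bound from \cite{CountingBeta}), and it yields the sharp tail bound $\Phi(\eta)\leq h_{\beta}(x)\eta$ rather than the paper's weaker $\frac{2}{2-\beta}\,\delta\, h_{\beta}(x)$, so the passage $\delta\to 0$ is cleaner. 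Two small caveats, neither fatal. First, your limit is $h_{\beta}(x)\lambda(I_{\beta})=\frac{h_{\beta}(x)}{\beta-1}$ with $\lambda$ unnormalised Lebesgue measure and $h_{\beta}=\frac{d\nu_{\beta}}{d\lambda}$, while the proposition asserts $h_{\beta}(x)$; this is a normalisation discrepancy internal to the paper (its final display bounds $\sum_i\lambda(A_i)$ by $1-2\delta$, i.e.\ treats $\lambda$ as normalised on $I_{\beta}$, consistent with Theorem \ref{EquiThm} where weak$^*$ convergence of the probability measures $\mu_{n,x}$ forces $\lambda|_{I_{\beta}}$ to be a probability measure), and your exact identity $\int_{I_{\beta}}\left(\frac{\beta}{2}\right)^n|\mathcal O^n(x)|\,dx=\frac{1}{\beta-1}$ correctly pins down the constant once conventions are fixed, so your value is the right one under your stated convention. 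Second, in that consistency check Fatou alone gives only the inequality $\int\liminf\leq\frac{1}{\beta-1}$; forcing equality would require uniform integrability of $\left(\frac{\beta}{2}\right)^n|\mathcal O^n(x)|$, which you have not established --- but since this is a sanity check rather than a step of the proof, nothing in the argument depends on it.
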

Then we see that, since $\epsilon$ was arbitrary in equation \ref{OnxUB},
\begin{eqnarray*}
\mu_n(x)(A)=\frac{\left(\frac{\beta}{2}\right)^n|\mathcal O^n(x)\cap A|}{\left(\frac{\beta}{2}\right)^n|\mathcal O^n(x)|}\to\lambda(A)\frac{h_{\beta}(x)}{h_{\beta}(x)}=\lambda(A)
\end{eqnarray*}
as $n\to\infty$. Now any interval $B\subset (\delta,\frac{1}{\beta-1}-\delta)$ can be written as a union of intervals $A_i$ for which there is a constant $h_{\beta}(A)$ such that equation \ref{hratio} holds, and so the proof of Theorem \ref{EquiThm} will be complete once we have proved that not too much mass is concentrated in sets $[0,\delta)$. This is included in the proof of Proposition \ref{GrowthProp}. \end{proof}
It remains only to prove Proposition \ref{GrowthProp}. This is an interesting proposition in its own right showing that Conjecture 1 of \cite{CountingBeta} holds at least for almost every $\beta\in(1,\sqrt{2})$. It was conjectured in \cite{CountingBeta} that this proposition holds for almost all $x\in I_{\beta}$ for all $\beta$ such that $\nu_{\beta}$ is absolutely continuous, this conjecture remains open. A similar question was asked in \cite{Girgensohn} relating to solutions of the Schilling equation, which share many similarities with Bernoulli convolutions.
\begin{proof}
We assume that $\beta$ is non-algebraic and that the conditions of the previous theorem hold, i.e. that $\nu_{\beta}$ is absolutely continuous with continuous density $h_{\beta}$ which is strictly positive on $(0,\frac{1}{\beta-1})$ and that $\beta$ satisfies the conclusions of Corollary \ref{LPSCor}. This holds for almost every $\beta\in(1,\sqrt{2})$. Since $h_{\beta}(x)>0$ on the interior of $I_{\beta}$ and $h_{\beta}$ is uniformly continuous we have that for any $\delta>0$ we can cover $(\delta, \frac{1}{\beta-1}-\delta)$ with intervals $A_1\cdots A_k$ such that there exists $h_{\beta}(A_i)$ satisfying equation \ref{hratio}. 

We first suppose for some $n_k\to\infty$ we have that too much of $|\mathcal O^{n_k}(x)|$ is concentrated in $[0,\delta)$ for some $\delta>0$. To be concrete, we suppose that there exists some $K>\frac{2}{2-\beta}$ for which
\[
|\mathcal O^{n_k}(x)\cap[0,\delta)|>h_{\beta}(x)\left(\frac{2}{\beta}\right)^{n_k}K\delta.
\]
But
\[
|\mathcal O^{n_k}(x)\cap[0,\delta)|=\left|\mathcal O^{n_k-1}(x)\cap\left[0,\frac{\delta}{\beta}\right)\right|+\left|\mathcal O^{n_k-1}(x)\cap\left[\frac{1}{\beta},\frac{1+\delta}{\beta}\right)\right|,
\]
and for for any $\epsilon>0$ there exists $N\in\mathbb N$ such that
\begin{equation}\label{EQ6}
\left|\mathcal O^{n}(x)\cap\left[\frac{1}{\beta},\frac{1+\delta}{\beta}\right)\right|<(1+\epsilon)h_{\beta}(x)\left(\frac{2}{\beta}\right)^{n}\frac{\delta}{\beta}
\end{equation}
for all $n>N$ by equation \ref{OnxUB}. Then we must have that
\begin{eqnarray*}
\left|\mathcal O^{n_k-1}(x)\cap\left[0,\frac{\delta}{\beta}\right)\right|&>&h(x)\left(\frac{2}{\beta}\right)^{n_k}K\delta-(1+\epsilon)h_{\beta}(x)\left(\frac{2}{\beta}\right)^{n_k-1}\frac{\delta}{\beta}\\
&=& h_{\beta}(x)\left(\frac{2}{\beta}\right)^{n_k-1}K\delta\left(\frac{2}{\beta}-\frac{1+\epsilon}{\beta K}\right)\\
&>&h_{\beta}(x)\left(\frac{2}{\beta}\right)^{n_k-1}K\delta,
\end{eqnarray*}
since $\left(\frac{2}{\beta}-\frac{1+\epsilon}{\beta K}\right)>1$ for sufficiently small $\epsilon$ by our choice of $K$. We iterate this equation to get 
\[
|\mathcal O^{n_k-m}(x)\cap[0,\frac{\delta}{\beta^m})|\leq\left|\mathcal O^{n_k-m-1}(x)\cap\left[0,\frac{\delta}{\beta^{m+1}}\right)\right|+\left|\mathcal O^{n_k-1}(x)\cap\left[\frac{1}{\beta},\frac{1+\delta}{\beta}\right)\right|,
\]
where we are using the interval $\left[\frac{1}{\beta},\frac{1+\delta}{\beta}\right)$ rather than $\left[\frac{1}{\beta},\frac{1+\delta}{\beta^m}\right)$ on the right because it allows us to use equation \ref{EQ6}. Iterating this equation to stage $n_k-N$ gives
\[
|\mathcal O^{N}(x)\cap[0,\frac{\delta}{\beta^{n_k-N}})|>h_{\beta}(x)\left(\frac{2}{\beta}\right)^{N}K\delta.
\]
Taking $n_k\to\infty$ we see that the multiset $\mathcal O^N(x)$ must contain the value $0$ multiple times. Since we have assumed that $\beta$ is non-algebraic, this is a contradiction. By symmetry, the same arguments show that not too much of $\mathcal O^n(x)$ can be concentrated in $[\frac{1}{\beta-1}-\delta, \frac{1}{\beta-1}]$.

Then building on the proof of the previous theorem, we can cover $(\delta, \frac{1}{\beta-1}-\delta)$ with intervals $A_i$ upon which $h_{\beta}(x)$ is constant up to multiplicative error $\epsilon$. Summing over $A_i$ and using the bounds in the proof of the previous theorem gives
\begin{eqnarray*}
|\mathcal O^n(x)\cap(\delta, \frac{1}{\beta-1}-\delta)|&=& \sum_{i=1}^k |\mathcal O^n(x)\cap A_i|\\
&\geq& \left(h_{\beta}(x)\left(\frac{2}{\beta}\right)^n\frac{(1-\epsilon)^2}{1+\epsilon}\right)\left(\sum_{i=1}^k \lambda(A_i)\right)\\
&\geq& h_{\beta}(x)\left(\frac{2}{\beta}\right)^n\left(\frac{(1-\epsilon)^2}{1+\epsilon}(1-2\delta)\right).
\end{eqnarray*}
Then
\begin{eqnarray*}
|\mathcal O^{n}(x)|&=&|\mathcal O^{n}(x)\cap[0,\delta)|+\left|\mathcal O^{n}(x)\cap\left(\frac{1}{\beta-1}-\delta,\frac{1}{\beta-1}\right]\right|+\sum_{i=1}^k|\mathcal O^n(x)\cap A_i|\\
&\leq& h_{\beta}(x)\left(\frac{2}{\beta}\right)^n\left(2\delta\frac{2}{2-\beta}+\frac{(1-\epsilon)^2}{1+\epsilon}(1-2\delta)\right)
\end{eqnarray*}

Then since $\delta$ and $\epsilon$ were arbitrary we see that
\[
\lim_{n\to\infty}\left(\frac{\beta}{2}\right)^n|\mathcal O^n(x)|=h_{\beta}(x)
\]
as required.\end{proof}

\subsection{Absolute Continuity from Strong Equidistribution}
For a partial converse, we show that if the sets $\mathcal O^n(x)$ equidistribute in a strong sense for almost every $x$ then $\nu_{\beta}$ is absolutely continuous. We note that the normalised Lebesgue measure of the switch region $S:=[\frac{1}{\beta},\frac{1}{\beta(\beta-1)}]$ is
\[
\left(\frac{1}{\beta(\beta-1)}-\frac{1}{\beta}\right)(\beta-1)=\frac{2}{\beta}-1
\]
Then if the measures $\mu_{n,x}$ converge weak$^*$ to normalised Lebesgue measure we would expect
\[
k_n(x):=\frac{\beta}{2}\left(\mu_{n,x}(S)+1\right)
\]
to converge to $1$. The following proposition shows that fast equidistribution of $\mathcal O^n(x)$ implies the absolute continuity of $\nu_{\beta}$.
\begin{prop}
Suppose that $\prod_{n=1}^{\infty}(k_n(x))$ is an $\mathcal L^1$ function of $x$. Then $\nu_{\beta}$ is absolutely continuous.
\end{prop}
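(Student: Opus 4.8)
The plan is to translate the hypothesis into a statement about the growth exponents $\overline f,\underline f$ of $\mathcal N_n(x;\beta)$ and then invoke the criterion recalled earlier from \cite{CountingBeta}, namely that $\nu_{\beta}$ is absolutely continuous as soon as $\overline f$ or $\underline f$ is an $\mathcal L^1$ function of positive integral. First I would record the branching recursion for the multisets $\mathcal O^n(x)$: a point of $\mathcal O^n(x)$ lying in the switch region $S$ has two successors in $\mathcal O^{n+1}(x)$ while every other point has exactly one, so, counting with multiplicity, $\mathcal N_{n+1}(x;\beta)=\mathcal N_n(x;\beta)(1+\mu_{n,x}(S))$. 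Hence $k_n(x)=\frac{\beta}{2}(1+\mu_{n,x}(S))=\frac{\beta}{2}\,\mathcal N_{n+1}(x;\beta)/\mathcal N_n(x;\beta)$, and telescoping gives the key identity $\prod_{n=1}^{N-1}k_n(x)=\left(\frac{\beta}{2}\right)^{N-1}\mathcal N_N(x;\beta)/\mathcal N_1(x)$. Writing $g_N(x):=\left(\frac{\beta}{2}\right)^N\mathcal N_N(x;\beta)$ and $P_N(x):=\prod_{n=1}^N k_n(x)$, this reads $g_N=\frac{\beta}{2}\,\mathcal N_1(x)\,P_{N-1}$ with $\mathcal N_1(x)\in\{1,2\}$. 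Since the hypothesis entails that the partial products converge almost everywhere to the $\mathcal L^1$ function $P:=\prod_{n=1}^{\infty}k_n$, I conclude that $g_N$ converges almost everywhere and therefore $\overline f(x)=\underline f(x)=\frac{\beta}{2}\,\mathcal N_1(x)\,P(x)$.

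Next I would establish the conservation law $\int_{I_{\beta}}g_N(x)\,dx=\frac{1}{\beta-1}$ for every $N$. This follows by summing over the $2^N$ words $a_1\cdots a_N$: the affine map $x\mapsto T_{a_1\cdots a_N}(x)=\beta^N x-c_{a_1\cdots a_N}$ carries the set $\{x: T_{a_1\cdots a_N}(x)\in I_{\beta}\}$, which is an interval of length $\beta^{-N}/(\beta-1)$ lying inside $I_{\beta}$, bijectively onto $I_{\beta}$. Hence $\int_{I_{\beta}}\mathcal N_N(x;\beta)\,dx=2^N\beta^{-N}/(\beta-1)$ and $\int_{I_{\beta}}g_N\,dx=\frac{1}{\beta-1}$, the full length of $I_{\beta}$, independently of $N$.

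Finally I would pass to the limit. Since $g_N\geq 0$, $g_N\to\overline f$ almost everywhere, and $\int g_N\equiv\frac{1}{\beta-1}$, a dominated passage shows both that $\overline f\in\mathcal L^1$ and, by the reverse Fatou lemma, that $\int_{I_{\beta}}\overline f\geq\limsup_N\int_{I_{\beta}}g_N=\frac{1}{\beta-1}>0$. The domination is supplied by the hypothesis through the pointwise bound $g_N\leq\beta P_{N-1}$: integrability of the full product $\prod_n k_n$, which is the quantitative form of strong equidistribution forcing $k_n\to 1$ sufficiently fast, furnishes an integrable majorant for the partial products $P_{N-1}$. With $\overline f$ an $\mathcal L^1$ function of positive integral, the criterion of \cite{CountingBeta} then gives that $\nu_{\beta}$ is absolutely continuous.

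The main obstacle is precisely this last passage to the limit. The factors $k_n(x)$ lie in $[\frac{\beta}{2},\beta]$ and straddle $1$, so the partial products $P_N$ are not monotone and need not be bounded above by their limit $P$; ruling out escape of mass, so that the constant value $\frac{1}{\beta-1}$ of $\int g_N$ is not lost in the limit, is the delicate step. It is exactly in order to control $\sup_N P_N$, equivalently to guarantee uniform integrability of the family $\{g_N\}$, that one must use integrability of the infinite product rather than the mere $\mathcal L^1$-boundedness of the partial products that the conservation law already provides.
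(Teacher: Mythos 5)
Your first paragraph is essentially the paper's proof verbatim: the branching recursion $\mathcal N_{n+1}(x;\beta)=\mathcal N_n(x;\beta)(1+\mu_{n,x}(S))$ telescopes to $\left(\frac{\beta}{2}\right)^{n+1}\mathcal N_{n+1}(x;\beta)=\frac{\beta}{2}\,\mathcal N_1(x)\prod_{i=1}^{n}k_i(x)$, so the hypothesis says exactly that $f_n(x):=\left(\frac{\beta}{2}\right)^{n}\mathcal N_n(x;\beta)$ converges to an $\mathcal L^1$ function, and the paper stops there, citing the main theorem of \cite{CountingBeta} in the form ``if $f_n$ converges to an $\mathcal L^1$ function then $\nu_{\beta}$ is absolutely continuous'', with no positivity clause to verify. (You retain the factor $\mathcal N_1(x)\in\{1,2\}$ that the paper silently absorbs into the product; that is harmless and in fact slightly more careful.)

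Where you diverge is in trying to verify the ``positive integral'' hypothesis of the criterion as it is recalled earlier in the paper for $\overline f$ and $\underline f$, and this is where there is a genuine gap. Your conservation law $\int_{I_{\beta}}g_N=\frac{1}{\beta-1}$ is correct, and you correctly identify that transferring it to the limit requires uniform integrability of $\{g_N\}$, equivalently an integrable majorant for the partial products $P_N$. But the assertion that integrability of the infinite product $P=\prod_{n=1}^{\infty}k_n$ ``furnishes an integrable majorant for the partial products'' is unsupported and false in general: writing $P_N=P\cdot\bigl(\prod_{n>N}k_n\bigr)^{-1}$, the tail products tend to $1$ only pointwise, and since each $k_n$ may be as small as $\frac{\beta}{2}<1$, nothing prevents $P_N$ from being of order $\beta^N$ on a set of measure of order $\beta^{-N}$ while $P$ itself stays bounded --- which is exactly the escape of mass you are trying to exclude, and is compatible with $\int g_N\equiv\frac{1}{\beta-1}$. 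So the reverse-Fatou step does not go through as written, and positivity of $\int\overline f$ is not established. This does not affect agreement with the paper's own argument, which never needs the positivity clause because it quotes the \cite{CountingBeta} theorem in the convergence-only form; but if that clause were genuinely required, your proof would be incomplete at precisely the point you flag as the main obstacle.
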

In particular, if the sets $\mathcal O^n(x)$ equidistribute fast enough and uniformly across $x$ then $k_n(x)$ tend to $1$ quickly and so the conditions of the theorem will be satisfied and $\nu_{\beta}$ will be absolutely continuous.
\begin{proof}
We see that we have a choice of the value of $a_{n+1}$ if and only if $T_{a_1\cdots a_n}(x)\in S$, otherwise there is a unique $a_{n+1}$ such that $T_{a_1\cdots a_{n+1}}(x)\in I_{\beta}$. Then have that 
\begin{eqnarray*}
\left(\frac{\beta}{2}\right)^{n+1}\mathcal N_{n+1}(x)=\left(\frac{\beta}{2}\right)^{n+1}|\mathcal O^{n+1}(x)|&=&\left(\frac{\beta}{2}\right)^{n+1}(|\mathcal O^n(x)|+|\mathcal O^n(x)\cap S|)\\
&=&\left(\frac{\beta}{2}\right)^{n+1}|\mathcal O^n(x)|.(1+\mu_{n,x}(S))\\
&=&\left(\frac{\beta}{2}\right)^{n+1}\prod_{i=1}^n(1+\mu_{i,x}(S))\\
&=&\left(\frac{\beta}{2}\right)^{n+1}\left(\frac{2}{\beta}\right)^n\prod_{i=1}^nk_i(x)\\
&=&\left(\frac{\beta}{2}\right)\prod_{i=1}^nk_i(x),
\end{eqnarray*}
which converges to an $\mathcal L^1$ function by the assumptions of our proposition. But the main theorem of \cite{CountingBeta} states that if $f_n(x):=\left(\frac{\beta}{2}\right)^{n}\mathcal N_{n}(x)$ converges to an $\mathcal L^1$ function then $\nu_{\beta}$ is absolutely continuous. \end{proof}

\section{Slicing Fractal Sets}\label{FractalSec}
We now turn to the question of disintegrating Hausdorff measure for self similar sets. The techniques that we used in the symbolic case can be combined with a few technical lemmas to show that slices through certain fractals have positive Hausdorff measure if and only if the corresponding projected measures are absolutely continuous with bounded density. We begin with some background on fractals.

Let $E\subset \mathbb R^n$ be a self-similar set without rotations, that is, a set satisfying
\[
E=\bigcup_{i=1}^l S_i(E)
\]
where the maps $S_i:\mathbb R^n\to\mathbb R^n$ are of the form $S_i(x)=\lambda_i(x)+d_i$ for some $\lambda_i\in(0,1), d_i\in\mathbb R^n$. We further suppose that our iterated function system satisfies the open set condition, i.e. that there is a non-empty open set $V\subset \mathbb R^n$ such that $V\supset \bigcup_{i=1}^l S_i(V)$ where the union is disjoint. Then $E$ has Hausdorff dimension $s$ satisfying
\[
\sum_{i=1}^l\lambda_i^s=1.
\]
Furthermore, the $s$-dimensional Hausdorff measure $\nu$ on $E$ is positive and finite and satisfies the self similarity relation
\[
\nu(A)=\sum_{i=1}^l \lambda_i^s\nu(\tilde T_i(A)),
\]
where $\tilde T_i(x):=S_i^{-1}(x)$. The open set condition implies that for almost every $x\in E$ there is a unique code 
$\underline a\in\Sigma:=\{0,\cdots,l\}^{\mathbb N}$ such that \[x\in [a_1\cdots a_n]_E:=S_{a_n}\circ S_{a_{n-1}}\circ\cdots\circ S_{a_1}(E)\] for each $n\in\mathbb N$. We call $\underline a$ the address of $x$.

We let $\pi_{\theta}$ denote orthogonal projection of $\mathbb R^n$ down a line $l_{\theta}$ through the origin at angle $\theta=(\theta_1,\cdots,\theta_{n-1})$. We let $\nu_{\theta}=\nu\circ\pi_{\theta}^{-1}$. Then $\nu_{\theta}$ satisfies the relation
\[
\nu_{\theta}(A)=\sum_{i=1}^l \lambda_i^s \nu(T_{i}(A))
\]
where $T_{i}(x)=\lambda_i^{-1}x-\pi_{\theta} (a_i)$ is the projection of the map $\tilde T_i$ under $\pi_{\theta}$. 

Now if $s>1$ then the Marstrand projection theorem says that for almost every value of $\theta$ the projection $\nu_{\theta}$ is absolutely continuous. The Marstrand slicing theorem says that for almost every $\theta$ and for almost every $x\in E_{\theta}$ the slice $E_{\theta,x}$ has Hausdorff dimension $s-1$ and has finite $(s-1)$-dimensional Hausdorff measure. We refer the reader to \cite{Falconer, ClimenhagaPesin} for proofs and discussions of the Marstrand slicing and projection theorems.

We let $h_{\theta}:\mathbb R\to\mathbb R^{+}$ be the density of $\nu_{\theta}$ if it exists, $h_{\theta}$ takes value $0$ outside of $E_{\theta}$. Then differentiating the self similarity equation for $\nu_{\theta}$ we see that
\begin{equation}\label{heqf}
h_{\theta}(x)=\sum_{i=1}^l \lambda_i^{s-1} h_{\theta}(T_{i}(x)),
\end{equation}
where we have used that the derivative of each $T_i$ is equal to $\lambda_i$. For $a_1\cdots a_n\in\{0,\cdots l\}^n$ we define the set
\[
[a_1\cdots a_n]_{E_{\theta,x}}:=E_{\theta,x}\cap (S_{a_n}\circ S_{a_{n-1}}\circ\cdots\circ S_{a_1}(E))
\]
Equation \ref{heqf} is our main tool in the proof of our theorem about the positivity of Hausdorff measure of slices through self similar sets. The proof of Theorem \ref{FractalThm} is similar to that of Theorem \ref{BetaThm}, but we require some extra lemmas to estimate the diameter of sets $[a_1\cdots a_n]_{E_{\theta,x}}$ because, unlike in the symbolic case, this diameter is not purely determined by the length of the word $a_1\cdots a_n$. We let $|A|$ denote the Euclidean diameter of a set $A$. This issue with diameters also means that we need the following condition:

\begin{defn}\label{DeltaDefn}
We say that a self similar set $E$ satisfies the slice coding condition if for all $\theta$ there exists a constant $\delta$ such that for all $x\in \pi_{\theta}(E)$ we have that either $|E_{\theta,x}|>\delta$ or $E_{\theta,x}\subset [a]_E$ for some $a\in\{1,\cdots,l\}$.
\end{defn}
Here ${[}a_1\cdots a_m{]}_E := S_{a_1}\circ\cdots\circ S_{a_m}(E)$. We suspect that all self similar sets where the self similarities do not contain rotations satisfy this condition, but we are unable to prove this. We assume for the rest of the article that the slice coding condition is satisfied.




\begin{lemma}\label{hRatio}
Suppose that $h_{\theta}$ is bounded. Then there exists a constant $C$ such that $\frac{h_{\theta}(x)}{|E_{\theta,x}|^{s-1}}<C$ for all $x\in\pi_{\theta}(E)$.
\end{lemma}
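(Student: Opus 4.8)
We want to bound $\frac{h_\theta(x)}{|E_{\theta,x}|^{s-1}}$ uniformly in $x\in\pi_\theta(E)$, given that $h_\theta$ is essentially bounded, say $h_\theta\le M$ almost everywhere. The plan is to use the self-similarity equation \ref{heqf} together with the slice coding condition to reduce any slice to one whose diameter is bounded below by $\delta$, and then to control the remaining finitely-many ``large'' slices by hand. The key observation is that both $h_\theta(x)$ and $|E_{\theta,x}|^{s-1}$ transform compatibly under the maps $T_i$: equation \ref{heqf} tells us that $h_\theta$ decomposes as a $\lambda_i^{s-1}$-weighted sum of $h_\theta\circ T_i$, while the slice $E_{\theta,x}$ decomposes into the pieces $[a]_{E_{\theta,x}}$, each of which is the image under the similarity $S_a$ (scaling by $\lambda_a$) of the slice $E_{\theta,T_a(x)}$ through the smaller copy. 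Thus $|[a]_{E_{\theta,x}}| = \lambda_a |E_{\theta,T_a(x)}|$, and raising to the power $s-1$ matches the weights appearing in \ref{heqf}.

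\textbf{The coding argument.} First I would fix $x\in\pi_\theta(E)$ and repeatedly apply the coding. If $|E_{\theta,x}|>\delta$, the slice is ``large'' and we stop. Otherwise, by the slice coding condition, $E_{\theta,x}\subset[a]_E$ for a single symbol $a$, so $E_{\theta,x}$ consists of a single child piece and we may pass to the rescaled slice $E_{\theta,T_a(x)}$, which has diameter $\lambda_a^{-1}|E_{\theta,x}|>|E_{\theta,x}|$. Since all $\lambda_i<1$ are bounded away from $1$, after finitely many steps the diameter exceeds $\delta$, so the process terminates at a word $a_1\cdots a_k$ with $|E_{\theta,T_{a_1\cdots a_k}(x)}|>\delta$. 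Along the way we have $|E_{\theta,x}|=\lambda_{a_1}\cdots\lambda_{a_k}\,|E_{\theta,T_{a_1\cdots a_k}(x)}|$, and iterating \ref{heqf} (keeping only the single surviving child at each step, since the other terms are supported off the relevant piece) gives $h_\theta(x)=(\lambda_{a_1}\cdots\lambda_{a_k})^{s-1}\,h_\theta(T_{a_1\cdots a_k}(x))$. Taking the ratio, the product of scaling factors cancels exactly, reducing the problem to
\[
\frac{h_\theta(x)}{|E_{\theta,x}|^{s-1}}=\frac{h_\theta(y)}{|E_{\theta,y}|^{s-1}},\qquad y:=T_{a_1\cdots a_k}(x),\ |E_{\theta,y}|>\delta.
\]
So it suffices to bound the ratio over the set of $y$ whose slices have diameter exceeding $\delta$.

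\textbf{Bounding the large slices.} On this remaining set the denominator is bounded below: $|E_{\theta,y}|^{s-1}>\delta^{s-1}$ when $s>1$ (and one argues symmetrically, bounding $|E_{\theta,y}|$ above by the diameter of $E$, in the case $s-1<0$). The numerator is bounded above by $M$ by hypothesis. Hence $\frac{h_\theta(y)}{|E_{\theta,y}|^{s-1}}\le M\delta^{-(s-1)}=:C$, and by the cancellation above this same $C$ bounds the ratio at every $x$.

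\textbf{The main obstacle.} The delicate point is the use of ``$h_\theta\le M$'' in the reduced inequality: since $h_\theta$ is only defined almost everywhere, one must be careful that the identity $h_\theta(x)=(\lambda_{a_1}\cdots\lambda_{a_k})^{s-1}h_\theta(T_{a_1\cdots a_k}(x))$ holds for the relevant points and that the essential bound $M$ can legitimately be applied to $h_\theta(y)$. The cleanest route is to prove the inequality $h_\theta(x)\le C\,|E_{\theta,x}|^{s-1}$ as an almost-everywhere statement first, exploiting that the coding map is measure-preserving up to the scaling weights, and then to note that both sides are determined by the geometry of $E$ in a way that extends the bound to all $x$ by the lower semicontinuity of slice diameters. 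Verifying that ``keeping only the surviving child'' in \ref{heqf} is valid—i.e. that the discarded terms $h_\theta(T_i(x))$ for $i\ne a$ genuinely vanish because $T_i(x)\notin E_\theta$—is the other point requiring care, and it is exactly here that the slice coding condition does its work.
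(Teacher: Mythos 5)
Your proposal is correct and follows essentially the same route as the paper: reduce a small slice to a slice of diameter at least $\delta$ via the slice coding condition, observe that the factor $(\lambda_{a_1}\cdots\lambda_{a_k})^{s-1}$ cancels between $h_\theta$ and $|E_{\theta,x}|^{s-1}$, and bound the ratio on large slices using $h_\theta\le M$ and $|E_{\theta,y}|\ge\delta$. The only cosmetic differences are that you iterate the coding one symbol at a time where the paper passes directly to the maximal word, and that you make explicit the constant $C=M\delta^{-(s-1)}$ and the almost-everywhere caveat that the paper leaves implicit.
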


\begin{proof}
First let $C:=\sup \{\frac{h_{\theta}(x)}{|E_{\theta,x}|^{s-1}}:|E_{\theta,x}|\geq\delta\}$ where $\delta$ was defined in the Definition $\ref{DeltaDefn}$. The fact that $h_{\theta}$ is bounded implies that $C$ is finite.

Now suppose that $0<|E_{\theta,x}|<\delta$. Then since $E_{\theta,x}$ satisfies the slice coding condition, there exists a unique $n\in\mathbb N$ and word $a_1\cdots a_n$ such that $E_{\theta,x}\subset [a_1\cdots a_n]_{E_{\theta,x}}$ but $E_{\theta,x}\not\subset [a_1\cdots a_{n+1}]_{E_{\theta,x}}$ for any choice of $a_{n+1}\in \{1,\cdots, n\}$. In particular, we have that $E_{\theta,T_{a_1\cdots a_n}(x)}\not\subset [a_{n+1}]_{E_{\theta,T_{a_1\cdots a_n}(x)}}$ for any choice of $a_{n+1}$, and so $|E_{\theta,T_{a_1\cdots a_n}(x)}|>\delta$.

Then using equation \ref{heqf} we have that
\[
h_{\theta}(x)=(\lambda_{a_n}\lambda_{a_{n-1}}\cdots \lambda_{a_1})^{s-1}h_{\theta}(T_{a_1\cdots a_n}(x)).
\]
By the self similarity of $E$ we have that
\[
|E_{\theta,x}|=\lambda_{a_n}\lambda_{a_{n-1}}\cdots \lambda_{a_1}|E_{\theta, T_{a_1\cdots a_n}(x)}|.
\]
Then
\[
\dfrac{h_{\theta}(x)}{|E_{\theta,x}|^{s-1}}=\dfrac{h_{\theta}(T_{a_1\cdots a_n}(x))}{|E_{\theta, T_{a_1\cdots a_n}(x)}|^{s-1}}\leq C
\]
where the final inequality follows from the definition of $C$ because $|E_{\theta, T_{a_1\cdots a_n}(x)}|\geq\delta$.
\end{proof}

Then following the proof of Theorem \ref{BetaThm}, we have the following theorem.

\begin{theorem}\label{FractalThm}
Suppose that $E$ is the attractor of an IFS without rotations satisfying the open set condition and definition \ref{DeltaDefn}. We further assume that the projection of Hausdorff measure on $E$ onto the line at angle $\theta$ through the origin is absolutely continuous with bounded density. Then $\mathcal H^{s-1}(E_{\theta,x})>0$ for $\nu_{\theta}$-almost every $x\in\pi_{\theta}(E)$.
\end{theorem}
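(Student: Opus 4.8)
The plan is to mirror the proof of Lemma \ref{Lem1}, but with Euclidean diameters of slice-cylinders replacing the uniform cylinder diameters $2^{-n}$ that made the symbolic case so clean. The essential identity is equation \ref{heqf}, which plays exactly the role that equation \ref{heq} played for $\beta$-expansions: iterating it along any collection of words gives a decomposition of $h_\theta(x)$ as a weighted sum over pieces of the slice. So first I would fix $x \in \pi_\theta(E)$ with $h_\theta(x)>0$ (which holds $\nu_\theta$-almost everywhere, since $\nu_\theta$ has density $h_\theta$) and take an arbitrary disjoint cover $\mathcal U$ of the slice $E_{\theta,x}$ by sets of the form $[a_1\cdots a_n]_{E_{\theta,x}}$. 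Just as in Lemma \ref{Lem1}, any such cover arises by pruning words whose slice-cylinder misses $E_{\theta,x}$, and on the surviving words $h_\theta(T_{a_1\cdots a_n}(x))$ is nonzero, so iterating equation \ref{heqf} yields
\[
h_\theta(x)=\sum_{a_1\cdots a_n\in\mathcal U}(\lambda_{a_1}\cdots\lambda_{a_n})^{s-1}\,h_\theta(T_{a_1\cdots a_n}(x)).
\]

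The key geometric step is to relate the Euclidean diameter $|[a_1\cdots a_n]_{E_{\theta,x}}|$ to the weight $(\lambda_{a_1}\cdots\lambda_{a_n})^{s-1}$. By self-similarity of $E$ one has $|[a_1\cdots a_n]_{E_{\theta,x}}|=\lambda_{a_1}\cdots\lambda_{a_n}\,|E_{\theta,T_{a_1\cdots a_n}(x)}|$, exactly as in the displayed identity used inside Lemma \ref{hRatio}. Therefore
\[
|[a_1\cdots a_n]_{E_{\theta,x}}|^{s-1}=(\lambda_{a_1}\cdots\lambda_{a_n})^{s-1}\,|E_{\theta,T_{a_1\cdots a_n}(x)}|^{s-1},
\]
and combining this with the identity above gives
\[
\sum_{a_1\cdots a_n\in\mathcal U}|[a_1\cdots a_n]_{E_{\theta,x}}|^{s-1}
=\sum_{a_1\cdots a_n\in\mathcal U}(\lambda_{a_1}\cdots\lambda_{a_n})^{s-1}\,|E_{\theta,T_{a_1\cdots a_n}(x)}|^{s-1}.
\]
Now I would invoke Lemma \ref{hRatio}: since $h_\theta$ is bounded, there is a constant $C$ with $h_\theta(y)/|E_{\theta,y}|^{s-1}<C$ for all $y$, equivalently $|E_{\theta,y}|^{s-1}>\tfrac1C h_\theta(y)$. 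Applying this with $y=T_{a_1\cdots a_n}(x)$ bounds the right-hand side below by $\tfrac1C\sum (\lambda_{a_1}\cdots\lambda_{a_n})^{s-1}h_\theta(T_{a_1\cdots a_n}(x))=\tfrac1C h_\theta(x)$.

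Hence every disjoint cover $\mathcal U$ of $E_{\theta,x}$ by slice-cylinders satisfies $\sum_{\mathcal U}|[a_1\cdots a_n]_{E_{\theta,x}}|^{s-1}\ge \tfrac1C h_\theta(x)>0$, and I would conclude $\mathcal H^{s-1}(E_{\theta,x})\ge \tfrac1C h_\theta(x)>0$ for $\nu_\theta$-almost every $x$. The one point requiring care, and which I expect to be the main obstacle, is that an \emph{arbitrary} cover of the slice by small Euclidean balls need not consist of slice-cylinders, so I must argue that covers by slice-cylinders suffice to compute (or at least to lower-bound) $\mathcal H^{s-1}$; this is precisely where the slice coding condition (Definition \ref{DeltaDefn}) enters, guaranteeing that the family of slice-cylinders has controlled diameters and is fine enough to extract from any $\delta$-cover a comparable cover by slice-cylinders, so that the diameter sum over balls is bounded below by a constant multiple of the diameter sum over an associated slice-cylinder cover. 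Establishing this comparison carefully—the analogue of the trivial remark in Lemma \ref{Lem1} that open covers restrict to cylinder covers—is the real work; the rest is the same weighted-sum manipulation combined with the bound from Lemma \ref{hRatio}.
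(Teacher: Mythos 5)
Your proposal follows essentially the same route as the paper's proof: iterate equation \ref{heqf} over a cover by slice-cylinders, convert the weights $(\lambda_{a_1}\cdots\lambda_{a_n})^{s-1}$ into diameters via $|[a_1\cdots a_n]_{E_{\theta,x}}|=\lambda_{a_1}\cdots\lambda_{a_n}|E_{\theta,T_{a_1\cdots a_n}(x)}|$, and apply Lemma \ref{hRatio} to bound the resulting weighted average by $C$, yielding $\mathcal H^{s-1}(E_{\theta,x})\geq h_{\theta}(x)/C$. The one step you flag as the real remaining work --- reducing arbitrary $\delta$-covers to covers by slice-cylinders --- is also present in the paper, which simply asserts that it is enough to consider cylinder coverings without elaboration.
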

\begin{proof}
We recall that Hausdorff measure is defined as the limit as $\delta\to 0$ of the infimum over all $\delta$-coverings $\mathcal U=\{\mathcal U_i\}$ of the quantity $\sum_{i=1}^{\infty} |\mathcal U_i|^s$. It is enough to consider coverings which are unions of cylinder sets $[a_1\cdots a_n]_{E_{\theta,x}}$. 
Then we have that
\[
|[a_1\cdots a_n]_{E_{\theta,x}}|=(\lambda_{a_1}\lambda_{a_2}\cdots\lambda_{a_n}).|E_{\theta,T_{a_1\cdots a_n}(x)}|
\]
Following our proof of Theorem \ref{BetaThm}, we have
\begin{eqnarray*}
h_{\theta}(x)&=&\sum_{[a_1\cdots a_n]_{E_{\theta,x}}\in\mathcal U} (\lambda_{a_1}\lambda_{a_2}\cdots\lambda_{a_n})^{s-1}h_{\theta}(T_{a_1\cdots a_n}(x))\\
&=& \sum_{[a_1\cdots a_n]_{E_{\theta,x}}\in\mathcal U} |[a_1\cdots a_n]_{E_{\theta,x}}|^{s-1} \left(\dfrac{\lambda_{a_1}\lambda_{a_2}\cdots\lambda_{a_n}}{|[a_1\cdots a_n]_{E_{\theta,x}}|}\right)^{s-1}h_{\theta}(T_{a_1\cdots a_n}(x))\\
&=& \sum_{[a_1\cdots a_n]_{E_{\theta,x}}\in\mathcal U} |[a_1\cdots a_n]_{E_{\theta,x}}|^{s-1} \dfrac{h_{\theta}(T_{a_1\cdots a_n}(x))}{(|E_{\theta,T_{a_1\cdots a_n}(x)}|)^{s-1}}\\
&=&C(\mathcal U)\sum_{[a_1\cdots a_n]_{E_{\theta,x}}\in\mathcal U} |[a_1\cdots a_n]_{E_{\theta,x}}|^{s-1},
\end{eqnarray*}
where $C(\mathcal U)$ is a weighted average of the values of $\dfrac{h_{\theta}(T_{a_1\cdots a_n}(x))}{(|E_{\theta,T_{a_1\cdots a_n}(x)}|)^{s-1}}$ over different $a_1\cdots a_n\in\mathcal U$. In particular, since $C(\mathcal U)<C$ for all covers $\mathcal U$, where $C$ is the constant defined in Lemma \ref{hRatio}, we see that
\[
\sum_{[a_1\cdots a_n]_{E_{\theta,x}}\in\mathcal U} |[a_1\cdots a_n]_{E_{\theta,x}}|^{s-1}>\frac{h_{\theta}(x)}{C}
\]
for each cover $\mathcal U$ of $E_{\theta,x}$, finally yielding that
\[
\mathcal H^{s-1}(E_{\theta,x})>\frac{h_{\theta}(x)}{C}
\]
which is positive on a set of $x$ of positive Lebesgue measure.
\end{proof}
\subsection{Further Fractal Results}
In this section we outline how the remaining results of sections \ref{ExtSec} and \ref{HDSec} transfer over to the fractal case. We have done the difficult part (turning Lemma \ref{Lem1} into Theorem \ref{FractalThm}), the remaining results are extremely straightforward and we do not cover them in detail.

First we remark that one can build a dynamical system analagous to that of section \ref{ExtSec} related to the set $E$. We define the space
\[
X_{\theta}:=\{(x,y)\in\mathbb R^2: x\in E_{\theta}, 0\leq y\leq h_{\theta}(x)\}.
\]

The self-similarity equation \ref{heq} for $h_{\beta}$ is directly analogous to the self similarity equation \ref{heqf} for $h_{\theta}$, and using the transformations $T_1\cdots T_l$ one can partition $X_{\theta}$ into subsets $X_{\theta}^1,\cdots X_{\theta}^l$ in the same way that $X$ was partitioned into $X_1, X_2$. We define a dynamical system on $X_{\theta}$ using the transformations $T_1\cdots T_l$ in the same was as was done in the construction of $\phi$ in section \ref{ExtSec}, and this induces a coding of elements of $X_{\theta}$. By mapping elements of $E$ to the elements of $X_{\theta}$ which have the same code, one has an isomorphism (up to sets of measure zero) between $(E, \mathcal H^s|_{E})$ and $(X_{\theta}, \lambda^2|_{X_{\theta}})$ where $\lambda^2$ is two dimensional Lebesgue measure. 

Now one can define a measure $\mu^{1}_x$ on the slice $E_{\theta,x}$ by pulling back normalised Lebesgue measure from the fibres $\{(x,y):0\leq y\leq h_{\theta}(x)\}$. This gives
\[
\mu^1_x([a_1\cdots a_n]_{E_{\theta,x}}):=\dfrac{(\lambda_{a_1}\cdots \lambda_{a_n})^{s-1} h_{\theta}(T_{a_1\cdots a_n}(x))}{h_{\theta}(x)}
\]
for $a_1\cdots a_n\in\{0,\cdots l\}^n$.

By the same Fubini argument given in the proof of Proposition \ref{m1Prop} we see that the probability measures $\mu^1_x$ disintegrate Hausdorff measure $\mathcal H^s$ on $E$.

We now wish to show that this disintegration coincides with normalised Hausdorff measure on slices. We define $\mu^2_x$ on sets $E_{\theta,x}$ by
\[
\mu_2(A)=\dfrac{\mathcal H^{s-1}(A)}{\mathcal H^{s-1}(E_{\theta,x})} 
\]
for $A\subset E_{\theta,x}$, which is well defined $\nu_{\theta}$ almost everywhere by Theorem \ref{FractalThm}. In \cite{Marstrand1954b}, Marstrand proved that
\[
\mathcal H^s(E)\geq \int_{\pi_{\theta}(E)}\mathcal H^{s-1}(E_{\theta,x})dx.
\]
Combined with our previous theorem it shows that, under the conditions of Theorem \ref{FractalThm},
\[
g(x):=\mathcal H^{s-1}(E_{\theta,x})
\]
is an $\mathcal L^1$ function with positive integral. But then following the proof of Corollary \ref{gCor}, we see that the function $g$ satisfies equation \ref{heqf}, and therefore there is a constant $K(\theta)$ such that
\[
g(x)=K(\theta)h_{\theta}(x).
\]
Finally, we note that $[a_1\cdots a_n]_{E_{\theta,x}}$ is a copy of $E_{\theta, T_{a_1\cdots a_n}(x)}$ scaled down by factor $\lambda_{a_1}\cdots \lambda_{a_n}$, and so we have that
\[
\mathcal H^{s-1}([a_1\cdots a_n]_{E_{\theta,x}})=(\lambda_{a_1}\cdots \lambda_{a_n})^{s-1}\mathcal H^{s-1}(E_{\theta, T_{a_1\cdots a_n}(x)}).
\]
Plugging this into our definition of $\mu^2_x$ we see that the measures $\mu^2$ and $\mu^1$ coincide whenever they are both defined. Since $\mu^1$ was a disintegration of Hausdorff measure on $E$, we have the following theorem.

\begin{theorem}
Suppose that the conditions of Theorem \ref{FractalThm} are satsified. Then the probability measures $\mu^2_x$, which are the normalised $(s-1)$-dimensional Hausdorff measure on slices through $E$, disintegrate the measure $\mathcal H^s$ on $E$.
\end{theorem}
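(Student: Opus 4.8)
The plan is to deduce the result from two ingredients that have essentially been assembled above: that the measures $\mu^1_x$ disintegrate $\mathcal H^s$ on $E$, and that $\mu^1_x=\mu^2_x$ for $\nu_\theta$-almost every $x$. Once both are in hand the theorem is immediate, since the disintegration of a measure is determined $\nu_\theta$-almost everywhere, and hence any family coinciding with a known disintegration is itself a disintegration.

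First I would record the disintegration property of $\mu^1_x$. The coding isomorphism between $(E,\mathcal H^s|_E)$ and $(X_\theta,\lambda^2|_{X_\theta})$ carries $\mu^1_x$ to one-dimensional Lebesgue measure on the fibre $\{(x,y):0\le y\le h_\theta(x)\}$ and $\mathcal H^s$ to $\lambda^2$, so that the defining identity of a disintegration reduces to $\int_{X_\theta} f\,d\lambda^2=\int_{E_\theta}\int f(x,y)\,d\lambda(y)\,d\lambda(x)$, which is Fubini's theorem exactly as in Proposition \ref{m1Prop}.

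The substance of the proof is the equality $\mu^1_x=\mu^2_x$. I would set $g(x):=\mathcal H^{s-1}(E_{\theta,x})$ and proceed in three steps. Theorem \ref{FractalThm} gives $g>0$ on a set of positive measure, while Marstrand's inequality $\mathcal H^s(E)\ge\int_{E_\theta}g(x)\,dx$ bounds $g$ in $\mathcal L^1$; hence $g$ is an $\mathcal L^1$ function of positive integral. Next, splitting each slice into the contributions from the first-level pieces $S_i(E)$ and using that $[a]_{E_{\theta,x}}$ is a similar copy of $E_{\theta,T_a(x)}$ scaled by $\lambda_a$, so that $\mathcal H^{s-1}([a]_{E_{\theta,x}})=\lambda_a^{s-1}\mathcal H^{s-1}(E_{\theta,T_a(x)})$, one checks that $g$ satisfies the self-similarity relation \ref{heqf}. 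Since $\mathcal L^1$ solutions of \ref{heqf} are unique up to a multiplicative constant, $g=K(\theta)h_\theta$. Substituting this identity together with the scaling law into the definition of $\mu^2_x$ collapses it to $(\lambda_{a_1}\cdots\lambda_{a_n})^{s-1}h_\theta(T_{a_1\cdots a_n}(x))/h_\theta(x)$, which is precisely the formula defining $\mu^1_x$; agreement on every cylinder slice forces $\mu^2_x=\mu^1_x$.

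The main obstacle will be the second step, namely verifying that $g$ genuinely satisfies \ref{heqf} and invoking the uniqueness of its $\mathcal L^1$ solution. The decomposition of $E_{\theta,x}$ into its first-level pieces and the scaling law for their Hausdorff measure depend on the slice coding condition of Definition \ref{DeltaDefn}, which guarantees that the cylinder slices scale cleanly with the contraction ratios; this is exactly the point at which the fractal argument departs from the symbolic one, where cylinder diameters are governed purely by word length. The uniqueness of $\mathcal L^1$ eigenfunctions of the transfer operator attached to \ref{heqf} is the same fact exploited for equation \ref{heq} in Lemma \ref{gCor}, and I would expect it to carry over to the present setting without essential change.
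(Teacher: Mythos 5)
Your argument follows the paper's proof essentially verbatim: the Fubini argument shows $\mu^1_x$ disintegrates $\mathcal H^s$, Marstrand's inequality together with Theorem \ref{FractalThm} makes $g(x)=\mathcal H^{s-1}(E_{\theta,x})$ an $\mathcal L^1$ function of positive integral satisfying equation \ref{heqf}, uniqueness of $\mathcal L^1$ solutions gives $g=K(\theta)h_\theta$, and the scaling law collapses $\mu^2_x$ onto $\mu^1_x$. The only slight inaccuracy is attributing the scaling identity $\mathcal H^{s-1}([a_1\cdots a_n]_{E_{\theta,x}})=(\lambda_{a_1}\cdots\lambda_{a_n})^{s-1}\mathcal H^{s-1}(E_{\theta,T_{a_1\cdots a_n}(x)})$ to the slice coding condition, whereas it is a direct consequence of self-similarity (the slice coding condition is needed instead for the diameter comparisons in Lemma \ref{hRatio}).
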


\section{Further Comments, Examples and Questions}
We begin by demonstrating that the example given in the introduction really is a special case of Theorem \ref{FractalThm}. First we need a strengthening of Marstrand's projection theorem for self similar sets with uniform contraction.

\begin{prop}
Let $E$ be a self similar set Hausdorff dimension $s>2$ for which the generating IFS does not contain rotations and for which each contraction has the same contraction ratio. Then for almost every $\theta=(\theta_1,\theta_2)\in[0,\pi)^2$ the orthogonal projection of s-dimensional Hausdorff measure on $E$ down to the line $l_{\theta}$ is an absolutely continuous measure with continuous density.
\end{prop}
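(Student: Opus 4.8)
The plan is to run the Fourier-analytic (Riesz energy) argument underlying Marstrand's projection theorem, but to push it one power of frequency further so as to obtain a continuous density rather than merely an $\mathcal L^2$ one. Write $\mu$ for the (normalised) $s$-dimensional Hausdorff measure on $E$ and $\nu_\theta=\mu\circ\pi_\theta^{-1}$ for its projection, and let $u_\theta\in S^2$ be the unit vector in direction $\theta=(\theta_1,\theta_2)$. The projection--slice relation gives $\widehat{\nu_\theta}(r)=\widehat\mu(ru_\theta)$ for $r\in\mathbb R$. Since the inverse Fourier transform of an $\mathcal L^1$ function is continuous (and vanishes at infinity), it suffices to prove that $\widehat{\nu_\theta}\in\mathcal L^1(\mathbb R)$ for almost every $\theta$; this yields at once that $\nu_\theta$ is absolutely continuous with continuous density, which is bounded because $E_\theta$ is compact.

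First I would reduce this $\mathcal L^1$ statement to a weighted $\mathcal L^2$ estimate. As $\nu_\theta$ is a probability measure, $|\widehat{\nu_\theta}|\le 1$, so the low frequencies contribute $\int_{|r|\le1}|\widehat{\nu_\theta}(r)|\,dr\le 2$. For the high frequencies, Cauchy--Schwarz gives, for any fixed $\epsilon>0$,
\[
\int_{|r|\ge1}|\widehat{\nu_\theta}(r)|\,dr\le\left(\int_{\mathbb R}|\widehat{\nu_\theta}(r)|^2|r|^{1+\epsilon}\,dr\right)^{1/2}\left(\int_{|r|\ge1}|r|^{-1-\epsilon}\,dr\right)^{1/2},
\]
and the second factor is finite since $1+\epsilon>1$. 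Hence it is enough to show that $\int_{\mathbb R}|\widehat{\nu_\theta}(r)|^2|r|^{1+\epsilon}\,dr<\infty$ for almost every $\theta$.

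I would establish this by averaging over the parameter and converting to a Riesz energy of $\mu$. Substituting $\widehat{\nu_\theta}(r)=\widehat\mu(ru_\theta)$, passing from the coordinates $(\theta_1,\theta_2)$ to the sphere $S^2$ (the parametrisation $\theta\mapsto u_\theta$ is non-degenerate off a null set, so ``almost every $\theta$'' and ``almost every direction'' coincide), and writing the spherical integral in Cartesian form via $\int_{\mathbb R^3}f=\int_{S^2}\int_0^\infty f(ru)\,r^2\,dr\,d\sigma(u)$, one gets
\[
\int_{[0,\pi)^2}\!\int_{\mathbb R}|\widehat{\nu_\theta}(r)|^2|r|^{1+\epsilon}\,dr\,d\theta\ \asymp\ \int_{\mathbb R^3}|\widehat\mu(\xi)|^2|\xi|^{(2+\epsilon)-3}\,d\xi\ \asymp\ I_{2+\epsilon}(\mu),
\]
where $I_{2+\epsilon}(\mu)=\iint|x-y|^{-(2+\epsilon)}\,d\mu(x)\,d\mu(y)$, using the classical identity between the weighted $\mathcal L^2$-norm of $\widehat\mu$ and a Riesz energy (valid since $2+\epsilon<3$). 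The open set condition makes $\mu$ Ahlfors $s$-regular, so $\mu(B(x,r))\lesssim r^s$ and therefore $I_t(\mu)<\infty$ for every $t<s$. As $s>2$, I fix $\epsilon>0$ with $2+\epsilon<s$; then $I_{2+\epsilon}(\mu)<\infty$, the $\theta$-average above is finite, and so $\int_{\mathbb R}|\widehat{\nu_\theta}(r)|^2|r|^{1+\epsilon}\,dr<\infty$ for almost every $\theta$, completing the proof.

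The substantive point --- and the only delicate one --- is the dimensional bookkeeping that makes $s>2$ the exact threshold: the extra weight $|r|^{1+\epsilon}$ needed to run Cauchy--Schwarz toward the continuous ($\mathcal L^1$) conclusion costs one power of frequency beyond the absolute-continuity conclusion, which is precisely what turns the requirement $\dim E>1$ of Marstrand's theorem into $\dim E>2$ here. I note that this Fourier route uses neither the absence of rotations nor the uniformity of the contraction ratios, only $s>2$ together with Ahlfors regularity; those hypotheses are retained simply to match the standing assumptions under which Theorem \ref{FractalThm} is applied. A more hands-on alternative, closer in spirit to the transversality arguments of Solomyak and Peres--Schlag, would analyse the infinite-product form of $\widehat{\nu_\theta}$ directly in the parameter $\theta$, but the energy method above is cleaner and suffices.
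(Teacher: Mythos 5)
Your argument is correct, but it takes a genuinely different route from the paper's. The paper exploits the uniform contraction ratio: writing $\nu_{\theta}$ as the distribution of $\sum_{n}\lambda^{n}\pi_{\theta}(\underline a_{i_n})$ with the $i_n$ i.i.d., it splits the sum into odd- and even-indexed terms to obtain $\nu_{\theta}=\nu_{\theta}^{odd}*\nu_{\theta}^{even}$, where each factor is the projection of Hausdorff measure on a self similar set of dimension $\frac{s}{2}>1$ and hence is absolutely continuous for almost every $\theta$ by Marstrand's projection theorem; the convolution of the two densities is then declared continuous. Your energy argument instead proves $\widehat{\nu_{\theta}}\in\mathcal L^1(\mathbb R)$ directly for almost every $\theta$, by averaging the weighted $\mathcal L^2$ norm $\int|\widehat{\nu_{\theta}}(r)|^2|r|^{1+\epsilon}\,dr$ over directions and identifying the average with $I_{2+\epsilon}(\mu)$, which is finite by upper $s$-regularity once $2+\epsilon<s$. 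Your approach buys generality and robustness: it uses neither the absence of rotations nor the equality of the contraction ratios (only $s>2$ and the regularity of $\mu$ coming from the open set condition), whereas the odd/even splitting genuinely needs a uniform ratio for the two halves to be independent self similar pieces; it also sidesteps a small imprecision in the paper, since the convolution of two merely $\mathcal L^1$ densities need not be continuous and one must invoke the $\mathcal L^2$ form of the projection theorem (so that $\mathcal L^2 * \mathcal L^2\subset C_0$) to justify that step. What the paper's route buys is brevity and the use of Marstrand's theorem purely as a black box. One cosmetic point: the displayed ``$\asymp$'' between the $d\theta$-integral and the Cartesian integral is not literally two-sided, since the spherical Jacobian $\sin\theta_1$ degenerates at the poles; but, as you note, finiteness of the spherical average already gives finiteness for almost every direction and hence for Lebesgue-almost every $\theta$, which is all you use.
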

\begin{proof}
This proposition, which is probably classical, is proved by a simple convolution argument analagous to one given by Solomyak in \cite{SolomyakAC} to prove that Bernoulli convolutions associated to a parameter $\beta\in(1,\sqrt{2})$ are absolutely continuous with countinuous density. If the set $E$ is generated by contractions $S_1\cdots S_l$ where $S_i(\underline x)=\lambda_i(\underline x)+\underline a_i$ then we can write the measure $\nu_{\theta}$ as the distribution of the sums
\[
\sum_{n=1}^{\infty}\lambda^i(\pi_{\theta}\underline a_{i_n}).
\]
where the $i_n$ are picked uniformly at random from the set $\{1,\cdots l\}$. But these sums can be decomposed into odd an even terms, so we see that $\nu_{\theta}=\nu_{\theta}^{odd}*\nu_{\theta}^{even}$ where these are the measures which give the distribution of the above sums restricted to odd and even terms respectively. Now the Hausdorff dimension $s>2$ is the unique solution of
\[
\sum_{i=1}^l \lambda^s=1,
\]
and so that we see that if $\lambda$ were to be replaced with $\lambda^2$ then the Hausdorff dimension of the corresponding set would be $\frac{s}{2}>1$. In particular, $\nu_{\theta}^{odd}$ and $\nu_{\theta}^{even}$ both absolutely continuous for almost all $\theta$, since they correspond to projections of Hausdorff measure on sets of dimension $\frac{s}{2}>1$. Hence the convolution $\nu_{\theta}=\nu_{\theta}^{odd}*\nu_{\theta}^{even}$ is almost surely absolutely continuous with continuous density, since the convolution of two absolutely continuous measures is absolutely continuous with continuous density.
\end{proof}

The Menger sponge has Hausdorff dimension $\frac{\log(20)}{\log(3)}>2$, it is a self similar set without rotations and satisfies the slice coding condition, thus we have that projections onto subplanes are absolutely continuous with bounded density with probability $1$, and hence by Theorem \ref{FractalThm} we have that almost every plane slice through them has positive finite $s-1$ dimensional Hausdorff measure.

{\bf Question 1:} In loose terms, the above proposition showed that for self similar sets $E$ with uniform contraction ratios and without rotations one can expect more regularity of the measures $\nu_{\theta}(E)$ when the Hausdorff dimension of $E$ is larger. Does one have such a principle if the condition that the contraction ratios are uniform is removed? What about for general sets without any self-similarity?


{\bf Question 2:} Does a self similar set $E$ for which the generating contractions do not contain rotations automatically satisfy the conditions of Definition \ref{DeltaDefn}?


{\bf Question 3:} Is the statement `$\mu_{n,x}\to\lambda|_{I_{\beta}}$ in the weak-star topology for Lebesgue almost-every $x\in I_{\beta}$' equivalent to the statement `$\nu_{\beta}$ is absolutely continuous'? What about with the measures $\nu_{\beta,x}$? Or for the analagous questions on slices and projections of fractals?


{\bf Question 4} Suppose that $\nu_{\beta}$ is singular. Can one describe the measures $m_x$? Do the quantities $\frac{m_x[0]}{m_x[1]}$ mean anything? When $h_{\beta}$ is well defined they relate in a natural way to $h_{\beta}$ through the formulation of $m^1_x$ 


{\bf Question 5:} Do there exist values of $\beta$ for which $\nu_{\beta}$ is absolutely continuous with unbounded density? We note that Feng and Wang found some non-Pisot values of $\beta$ for which $\nu_{\beta}$ is either singular or is absolutely continuous with unbounded density. One might hope that geometric analytic methods may forbid the possibility that $\mathcal E_{\beta}(x)$ has zero Hausdorff measure for each value of $x$, and hence rule out the possibility that $\nu_{\beta}$ is absolutely continuous with unbounded density, this would be very interesting as it would provide non-Pisot examples of singular Bernoulli convolutions.

\section*{Acknowledgements}
Many thanks to Karma Dajani for some extremely helpful discussions. This work was supported by the Dutch Organisation for Scientific Research (NWO) grant number 613.001.022.
\bibliographystyle{plain}
\bibliography{betaref}

\end{document}